\newtheorem{theorem}{Theorem}[section]
\newtheorem{lemma}[theorem]{Lemma}
\newtheorem{proposition}[theorem]{Proposition}
\newtheorem{corollary}[theorem]{Corollary}
\theoremstyle{definition}
\theoremstyle{remark}
\newtheorem{remark}[theorem]{Remark}
\numberwithin{equation}{section}
\begin{document}

\title[Stratifications and Yomdin--Gromov parametrizations]
      {Strong stratifications and uniform \\
       Yomdin--Gromov parametrizations \\
       in valued fields with analytic structure}

\author[Krzysztof Jan Nowak]{Krzysztof Jan Nowak}


\subjclass[2000]{Primary 03C98, 14G22, 32S45, 32S60; Secondary  03C10, 14B05, 32B20, 32P05.}

\keywords{valued fields, analytic structures, quantifier elimination, cell decomposition, definable resolution of singularities, desingularization of terms, strong stratifications, algebraic Skolemization, term description, Yomdin--Gromov parametrizations}

\date{}

\begin{abstract}
The paper concerns uniform Yomdin--Gromov para\-me\-trizations together with an estimate of their number, which generalizes a theorem by Cluckers--Forey--Loeser to arbitrary equi\-characteristic zero valued fields with analytic structure.
To this end, we establish a certain strong analytic stratification of definable sets, based on a definable non-Archimedean version of Bierstone--Milman's canonical desingulari\-zation algorithm from our earlier paper. Other basic tools are: elimination of valued field quantifiers, term description of functions definable in analytic structures, and Lipschitz cell decomposition compatible with $RV$-parametrized sets in Hensel minimal structures.
\end{abstract}

\maketitle

\section{Introduction}

We begin by highlighting the motivation and stating the main results. The introduction of basic concepts and notation will be postponed until Section~2.
This article is largely inspired by the problem of the existence of Yomdin--Gromov $T_{r}$-parametrizations, together with an estimate of their number, of sets definable over valued fields $K$ with analytic structure. A brief historical background, in real geometry and in non-Archimedean geometry, will be provided in Section 6.

\vspace{1ex}

Partial results in this direction were achieved in the papers~\cite{C-Com-L,C-Fo-L}. Among others, Cluckers--Forey--Loeser achieved \cite[Theorem~3.1.5]{C-Fo-L} on uniform parametrizations for models $K$ of the algebraic Skolemization (in the residue field sort) $T$ of the theory of equicharacteristic zero, complete, discretely valued fields, possibly augmented by adding some restricted analytic function symbols coming from~\cite[Example~4.4(1)]{C-Lip-0}.

\vspace{1ex}

The basic idea of their proof is first to achieve a certain strong $T_{1}$-parametrization defined on an open cell $P \subset K^{m}$, with zero centers, by terms satisfying a condition $(*)$. Such a parametrization is globally analytic on each box $B \subset K^{m}$ of $P$, and even on each associated box $B_{as} \subset (K^{alg})^{m}$; here
$K^{alg}$ denotes the algebraic closure of $K$. This allows them to obtain a $T_{r}$-parametrization via precomposing by the $r$-th power functions (\cite[Remark~3.1.6]{C-Fo-L}). The passage to the algebraic closure, in turn, enables the use of Cauchy's estimates, which is crucial for the calculations related to power substitution. The philosophy behind this approach is as follows:

\vspace{1ex}

$\bullet$ firstly, precomposing a $T_{1}$-parametrization with the $r$-th power functions yields $T_{r}$-property on boxes;

$\bullet$ secondly, global $T_{1}$-property and $T_{r}$-property on boxes imply global $T_{r}$-property.

\vspace{1ex}

The idea of composing with $r$-th power functions was earlier used in the real case by Cluckers--Pila--Wilkie~\cite{C-P-W}. There the difficulty lies in that o-minimal cells have frontiers which are also modified when composed with power functions. On the other hand, the absence of frontiers and of convexity arguments was a challenge leading them to the concept of $T_{r}$-maps (cf.~\cite{C-Com-L,C-Fo-L}).

\vspace{1ex}

The main purpose of our paper is to develop the theory of Yomdin--Gromov in arbitrary equicharacteristic zero valued fields with analytic structure (Theorem~\ref{strong-r}). Generally, we adopt the strategy of Cluckers--Forey--Loeser. Our goal is thus to achieve strong $T_{1}$-parametrizations (Theorem~\ref{strong-1}), and then  $T_{r}$-parametrizations via precomposing with power functions. To realize it, we shall need a term description of definable functions after a very limited Skolemization in the sort $RV$.

\vspace{1ex}

Because of Cauchy's estimates involved when substituting power functions, we shall separately treat the case where the value group is densely ordered and the opposite one, which is much more delicate.

\vspace{1ex}

In the latter case, the challenge for us was the passage to the algebraic closure of the ground field $K$ in the general context of valued fields with analytic structure. This equivalent of the complexification method in real algebraic geometry will be achieved by means of the concept of a certain strong stratification. The technique of such stratifications is based on a definable non-Archimedean version of Birstone--Milman's canonical desingularization algorithm by blowing up, achieved in our paper~\cite{Now-Sym}.

\vspace{1ex}

This paper is organized as follows. Section~2 provides some necessary concepts and notation concerning valued fields with analytic structure, along with a language $\mathcal{L}_{rv}$ for the leading term structure $RV$ from our paper~\cite{Now-APAL}. Furthermore, we discuss some basic tools which are available in the geometry of separated versus strictly convergent analytic structures, and in Hensel minimal geometry as well. Also, we remind the reader of the concepts of an ordinary cell, a parametrized cell and a twisted box of a cell, and finally the fundamental Theorem~\ref{SLCD} on Lip\-schitz cell decomposition compatible with a family of $RV$-parametrized sets.

\vspace{1ex}

Every strictly convergent Weierstrass system can be extended by definition to a separated Weierstrass system (cf.~\cite{C-Lip}). We prefer to deal with the latter because it offers more suitable quantifier elimination. On the other hand, definable resolution of singularities is then a far more subtle issue, and requires some extra efforts (see Section~3 and~\cite{Now-Sym}).

\vspace{1ex}
In Section~3, we recall the definable version of Bierstone--Milman's desingularization algorithm on strong analytic manifolds over a field $K$ with separated analytic structure. It was achieved in our earlier paper~\cite{Now-Sym}. Finally, stratifications of analytic sets are established.

\vspace{1ex}

Section~4 is devoted to Theorem~\ref{strat-strong} on strong strati\-fi\-cations of definable sets, whose proof uses our Theorem~\ref{desing-th} ff.\ on desingularization of terms and elimination of valued field quantifiers in separated analytic structures. This is a new result, which holds for sets definable over all, not necessarily algebraically closed, equicharacteristic zero valued fields with analytic structure. It provides much more precise description of the strata (cf.~Remark~\ref{emphasize}), and is the key ingredient applied in the proof of the existence of strong $T_{1}$-stratifications.

\vspace{1ex}

Section~5 provides a certain term description of definable functions, which is one of the basic tools in the proof of the existence of strong $T_{1}$-stratifications. It is available after a very limited, algebraic Skolemization, which consists only in adding the algebraic Skolem functions for the roots in the $RV$-sort, and for the polynomial equations in the residue field sort $\mathcal{R} \subset RV$.

\vspace{1ex}

Finally, the main result of this paper, i.e.\ the existence of Yomdin--Gromov parametrizations, together with an estimate of their number, is proven in Section~6.

\vspace{1ex}

We conclude the introduction with the following comment. It seems that Hensel minimal structures form a natural habitat for Yomdin--Gromov parametrizations, which seems to be a problem still open. To our best knowledge, the only results achieved in the general Hensel minimal settings, are those from the papers~\cite{C-H-R-1,Ver} covering merely the case of definable curves (i.e.\ definable sets of dimension one).

\section{Basic notions and tools}

We recall some basic terminology concerning analytic structures on valued fields from~\cite{C-Lip-0,C-Lip}.
Following the convention from~\cite{BGR,LR-separated,LR-subanalytic,LR-strat,LR-uniform,C-Lip-R,C-Lip-0,C-Lip}, the valuation ring of the valued field $K$ and its maximal ideal are denoted by $K^{\circ}$ and $K^{\circ\circ}$. This notation corresponds with another tradition represented by~\cite{Bas,E-P,C-Com-L,C-Fo-L,C-H-R,C-H-R-1,Ver}:
$$ K^{\circ} = \mathcal{O}_{K} \ \ \text{and} \ \ K^{\circ\circ} = \mathcal{M}_{K}. $$
We specify the language $\mathcal{L}_{rv}$ on the imaginary sort $RV$ of Henselian valued fields $(K,v)$ from~\cite[Section~2]{Now-APAL}, which goes back to Basarab~\cite{Bas}. The residue field is denoted by $Kv = \mathcal{O}_{K}/\mathcal{M}_{K}$. Let
$$ rv: K^{\times} \to G(K) := K^{\times}/(1+\mathcal{M}_{K}) $$
be the canonical group epimorphism. Since $vK \cong K^{\times}/\mathcal{O}_{K}^{\times}$, we get the canonical group epimorphism $\bar{v}: G(K) \to vK$ and the following exact sequence
\begin{equation}\label{exact}
1 \to (Kv)^{\times} \to G(K) \to vK \to 0.
\end{equation}
We put $v(0) = \infty$ and $\bar{v}(0) = \infty$. For simplicity, we shall write
$$ v(a) = (v(a_{1}),\ldots,v(a_{n})) \ \ \text{or} \ \ rv(a) = (rv(a_{1}),\ldots,rv(a_{n})) $$
for an $n$-tuple $a = (a_{1},\ldots,a_{n}) \in K^{n}$.

\vspace{1ex}

The language on the auxiliary imaginary sort
$$ RV(K) := G(K) \cup \{ 0 \} $$
is the language of groups $(1, \cdot)$ with one unary predicate $\mathcal{P}$ such that
$$ \mathcal{P}_{K}(\xi) \ \Leftrightarrow \ \bar{v}(\xi) \geq 0; $$
here we put $\xi \cdot 0 = 0$ for all $\xi \in RV(K)$. The predicate
$$ \mathcal{R}(\xi) \ \ \Longleftrightarrow \ \  [\xi=0 \ \vee \  (\xi \neq 0 \wedge \mathcal{P}(\xi) \wedge \mathcal{P}(1/\xi))] $$
will be construed as the residue field $Kv$ with the language of rings $(0,1,+,\cdot)$; obviously,
$\mathcal{R}_{K}(\xi) \ \Leftrightarrow \ \bar{v}(\xi) =0$. The sort $RV$ binds together the residue field and value group. We have one connecting map
$$ rv: K \to RV(K), \ \ rv(0) = 0. $$

\vspace{1ex}

The valuation ring can be defined by putting $\mathcal{O}_{K} = rv^{-1}(\mathcal{P}_{K})$.
The residue map $r: \mathcal{O}_{K} \to Kv$ will be identified with the map
$$ r(x) = \left\{ \begin{array}{cl}
                        rv(x) & \mbox{ if } \ x \in \mathcal{O}_{K}^{\times}, \\
                        0 & \mbox{ if } \ x \in \mathcal{M}_{K}.
                        \end{array}
               \right.
$$

Now let
$$ \mathcal{A} = \{ A_{m,n} \}_{m,n \in \mathbb{N}} $$
be a separated Weierstrass $(A,I)$-system, where $I$ is a proper ideal of a commutative ring $A$ with unit.
Fix a non-trivially valued equicharacteristic zero field $K$ with a separated analytic $\mathcal{A}$-structure, which is given by a collection
$$ \alpha = \{ \alpha_{m,n} \}_{m,n \in \mathbb{N}} $$
of homomorphisms from $A_{m,n}$ to the ring of $K^{\circ}$-valued functions on $(K^{\circ})^{m} \times (K^{\circ \circ})^{n}$.

\vspace{1ex}

The ground field $K$ will be considered with the analytic, two sorted language $\mathcal{L}_{\mathcal{A}}$ specified as follows:

\vspace{1ex}

$\bullet$ on the main, valued field sort $K$, the language of rings $(0,1,+,-,\cdot)$ with the multiplicative inverse $(\cdot)^{-1}$, with convention $0^{-1}=0$, and with the names of all functions of the collection $\mathcal{A}$;

$\bullet$ on the auxiliary sort $RV(K)$, the language $\mathcal{L}_{rv}$ recalled at the beginning of this section.

\vspace{1ex}

Power series from $A_{m,n}$ are construed via $\alpha$ as $f^{\alpha}=\alpha_{m,n}(f)$ on their natural domains, and as zero outside them.

\vspace{1ex}

Note that the definable sets in the auxiliary sort $RV$ are precisely those already definable in the pure valued field language. Denote by $T_{\mathcal{A}}$ the $\mathcal{L}_{\mathcal{A}}$-theory of all Henselian, non-trivially valued fields of equicharacteristic zero with analytic $\mathcal{A}$-structure. This theory is $\omega$-h-minimal (cf.~\cite[Theoem~6.2.1]{C-H-R}) and, a fortiori, 1-h-minimal. Every model of $T_{\mathcal{A}}$ is a Henselian field (cf.~\cite[Proposition~4.5.10]{C-Lip-0}).

\vspace{1ex}

Without changing the family of definable sets, we can assume, by taking the quotient with the kernel of $\alpha_{0,0}$, that the homomorphism $\alpha_{0,0}$ from $A_{0,0}$ into $K^{\circ}$ is injective; then so are all the homomorphisms $\alpha_{m,n}$. By extension of parameters, one can adjoin the elements of $K^{\circ}$ to $A_{0,0}$ to get a separated Weierstrass system $\mathcal{A}(K)$ over $(K^{\circ},K^{\circ \circ})$. Then $K$ has a unique analytic $\mathcal{A}(K)$-structure (cf.~\cite[Section~4.5]{C-Lip-0}).

\vspace{1ex}

We shall for simplicity identify the analytic power series
$$ f = f(\xi,\rho) \in A_{m,n}^{\dag} := K \otimes_{K^{\circ}} A_{m,n} $$
with their interpretations $f^{\alpha}$ on their natural domains.

\vspace{1ex}

Henceforth, by $\mathcal{L}$-definable or $\mathcal{L}_{A}$-definable we mean 0-definable or $A$-definable for a set of parameters $A$.


\vspace{1ex}

The theory $T_{\mathcal{A}}$, unlike that of strictly convergent analytic structures, admits elimination of valued field quantifiers in the language $\mathcal{L}_{\mathcal{A}}$ (cf.~\cite[Theorem~6.3.7]{C-Lip-0}).

\vspace{1ex}

The rings $A_{m,0}^{\dagger}(K)$ and $A_{0,n}^{\dagger}(K)$ of power series with only one kind of variables enjoy --- similarly as in the classical rigid analytic geometry --- good algebraic properties such as to be Noetherian, factorial, normal or excellent, because they fall under the Weierstrass--R\"{u}ckert theory (cf.~\cite[Proposition~5.2.5]{C-Lip-0} and~\cite[Section~5.2]{BGR}). Yet it seems (personal communication with the authors of~\cite{C-Lip-0}) that the rings $A_{m,n}^{\dagger}(K)$ of globally analytic functions with two kinds of variables defined on the spaces
$$ (K^{\circ})^{m} \times (K^{\circ \circ})^{n}, \ \ m,n \in \mathbb{N}, $$
may fail to satisfy good algebraic properties such as to be Noetherian or excellent. Therefore the techniques of resolution of singularities, as e.g.\ the ones by Bierstone--Milman~\cite{BM} or Temkin~\cite{Tem-2}, cannot be directly applied to those spaces.

\vspace{1ex}

The opposite situation occurs in the case of strictly convergent analytic structures. Then the rings of globally analytic functions are Noetherian and excellent because R\"{u}ckert's theory applies to them (cf.~\cite{C-Lip} and~\cite[Proposition~5.2.5]{C-Lip-0}).
But elimination of valued field quantifiers requires to augment the language with the henselian functions (cf.~\cite[Theorem~3.4.4]{C-Lip}).

\vspace{1ex}

To make both these powerful tools of analytic geometry available at the same time, we gave in our paper~\cite{Now-Sym} a definable, non-Archimedean adaptation of the canonical desingularization by blowing up (hypersurface case) due to Bierstone--Milman~\cite{BM}. It was carried out within a category of definable, strong analytic manifolds and maps, which is more flexible than that of affinoid varieties and maps.

\vspace{1ex}

Transformations to normal crossings and resolution of singularities by blowing up can be applied to various geometrical and topological problems over non-locally compact fields via our closedness theorem (cf.~\cite[Theorem~1.3]{Now-APAL} and also~\cite{K-N,Now-Sel,Now-Sing}). This theorem holds for all Hensel minimal structures on equicharacteristic zero fields $K$ which satisfy the following condition:

\vspace{1ex}

{\em Every definable subset in the auxiliary sort $RV(K)$ is already definable in the pure algebraic language.}

\vspace{1ex}

Observe that every finite definable open covering of a definable closed subset of $K^{n}$ has a finite open refinement that is a partition into clopen subsets, because the affine space $K^{n}$ (and even every definable Hausdorff LC-space in a Hensel minimal structure) is definably ultraparacompact (cf.~\cite[Corollary~7.7]{Now-APAL}).

\vspace{1ex}

Now, we remind the reader of the concept of an ordinary and of a parametrized cell.
For $m \leq n$, denote by $\pi_{\leq m}$ or $\pi_{< m+1}$ the projection $K^{n} \to K^{m}$ onto the first $m$ coordinates; put $x_{\leq m} = \pi_{\leq m}(x)$. Let $C \subset K^{n}$ be a non-empty 0-definable set, $j_{i} \in \{ 0, 1 \}$ and
$$ c_{i} : \pi_{<i}(C) \to K, \ \ \ i=1,\ldots,n, $$
be 0-definable functions. Then $C$ is called an (ordinary) 0-definable cell with center tuple $c = (c_{i})_{i=1}^{n}$ and of cell-type $j =(j_{i})_{i=1}^{n}$ if it is of the form:
$$ C = \left\{ x \in K^{n}: (rv(x_{i} - c_{i}(x_{<i})))_{i=1}^{n} \in R \right\} $$
for a (necessarily 0-definable) set
$$ R \subset \prod_{i=1}^{n} \, j_{i} \cdot G(K), $$
where $0 \cdot G(K) = {0} \subset RV(K)$ and $1 \cdot G(K) = G(K) \subset RV(K)$. One can similarly define $A$-definable cells.

\vspace{1ex}

For any $r \in R$, the subset
$$ \left\{ x \in K^{n}: (rv(x_{i} - c_{i}(x_{<i})))_{i=1}^{n} = r \right\} $$
is called a twisted box of the cell $C$.

\vspace{1ex}

If the algebraic and definable closure do not coincide, parameterized cells are required. They consist of infinitely many ordinary cells with parameters running over the leading term sort $RV$.

\vspace{1ex}

More precisely, consider a 0-definable function $\sigma: C \to RV(K)^{k}$. Then $(C,\sigma)$ is called a 0-definable parameterized (by $\sigma$) cell if each set $\sigma^{-1}(\xi)$,  $\xi \in \sigma(C)$, is a $\xi$-definable cell with some center tuple $c_{\xi}$ depending definably on $\xi$ and of cell-type independent of $\xi$.
If the language $\mathcal{L}$ has an angular component map $\overline{ac}: K \to Kv$ (called sometimes, a coefficient map), then one can take $\sigma$ in the above definition to be residue field valued instead of $RV$-valued (cf.~\cite{C-H-R}, Theorem~5.7.3).

\vspace{1ex}

In the sequel, we shall need the following fundamental theorem on Lipschitz cell decomposition with preparation of $RV$-parametrized sets (cf.~\cite[Theorem~5.2.4]{C-H-R} with Addenda~1 and~5), which holds in any 1-h-minimal structure.

\begin{theorem}\label{SLCD}
For 0-definable sets
$$ X \subset K^{n} \ \ \text{and} \ \ P \subset X \times RV(K)^{t}, $$
there exists a finite decomposition of $X$ into 0-definable parametrized cells $(C_{k},\sigma_{k})$ such that the fibers of $P$ over each twisted box of each $C_{k}$ are constant or, equivalently, the fiber of $P$ over each $\xi \in RV(K)^{t}$ is a union of some twisted boxes from the cells $C_{k}$.

Furthermore, one can require that each $C_{k}$ is, after some coordinate permutation, a parametrized cell of type $(1,\ldots,1,0,\ldots,0)$ with 1-Lipschitz centers
$$ c_{\xi} = (c_{\xi,1},\ldots,c_{\xi,n}), \ \ \xi \in \sigma(C). $$
Such cells $C_{k}$ shall be called 0-definable parametrized Lipschitz cells.
\end{theorem}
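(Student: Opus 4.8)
The plan is to reduce the statement to the known Lipschitz cell decomposition in $1$-h-minimal structures by absorbing the $RV$-parameter $P$ into the cell data. The whole assertion is quoted from \cite[Theorem~5.2.4]{C-H-R} together with Addenda~1 and~5, so the task is really to explain how the pieces fit together. First I would recall that, by Hensel minimality, any $0$-definable $P \subset X \times RV(K)^{t}$ is, fibrewise over $RV(K)^{t}$, a $0$-definable family of subsets of $X$; I would then apply the cell decomposition theorem relative to this family, which produces a finite partition of $X$ into $0$-definable cells $C_{k}$, each equipped with a presentation $(C_{k},\sigma_{k})$ as a union of twisted boxes, such that the defining data of $P$ is ``resolved'' along each twisted box. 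The content of Addendum~1 is precisely that the decomposition can be taken compatible with finitely many given $RV$-parametrized sets; the content of Addendum~5 is the Lipschitz refinement. Thus the first half follows by invoking compatibility, and the equivalence of the two formulations (fibres of $P$ constant on twisted boxes $\iff$ each $RV$-fibre is a union of twisted boxes) is a direct unwinding of definitions once the twisted boxes are fixed.

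Second, I would upgrade the cells to Lipschitz cells. Here one applies the Lipschitz cell decomposition part of the theorem: after a coordinate permutation each cell of dimension $d$ can be presented, fibrewise over the $RV$-parameters $\xi \in \sigma(C)$, as a cell of type $(1,\ldots,1,0,\ldots,0)$ (with $d$ ones), whose centers $c_{\xi}=(c_{\xi,1},\ldots,c_{\xi,n})$ are $1$-Lipschitz in the relevant valued-field variables. The key mechanism is the $1$-h-minimal valued-field geometry: one rectilinearizes the cell coordinate by coordinate, using that the graphs of the cell boundary functions can be prepared so as to have Lipschitz constant $1$ after rescaling and a further cell subdivision — this is the standard Lipschitz-continuity-at-the-level-of-centers argument. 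One has to be slightly careful that the Lipschitz refinement does not destroy the compatibility with $P$; but since refining a cell only subdivides its twisted boxes into smaller twisted boxes, and the fibres of $P$ were already constant on the old twisted boxes, they remain constant on the new ones. So the two requirements — compatibility with $P$ and the Lipschitz normal form — can be met simultaneously by first decomposing compatibly with $P$ and then Lipschitz-refining.

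The main obstacle I anticipate is not logical but bookkeeping: making precise that ``parametrized cell'', ``twisted box'', and ``$1$-Lipschitz center'' from \cite{C-H-R} match the conventions needed later in Sections~4--6, and checking that the Lipschitz refinement of a parametrized cell is again a parametrized cell of the asserted type $(1,\ldots,1,0,\ldots,0)$ with the centers still $1$-Lipschitz \emph{uniformly} in $\xi$. Once the dictionary between the two papers' terminology is in place, the proof is a citation of \cite[Theorem~5.2.4]{C-H-R} with Addenda~1 and~5, since the theory $T_{\mathcal{A}}$ over a field $K$ as above is $1$-h-minimal (its $RV$-definable sets are algebraic, and equicharacteristic-zero analytic structures satisfy the Hensel minimality hypotheses by \cite{C-H-R}). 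I would therefore present the argument as: (i) recall $1$-h-minimality of $T_{\mathcal{A}}$; (ii) invoke the $RV$-compatible Lipschitz cell decomposition; (iii) verify the equivalence of the two phrasings of the fibre condition; (iv) note stability of the fibre condition under Lipschitz refinement. $\hspace*{\fill}\Box$
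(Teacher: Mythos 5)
Your proposal matches the paper exactly: the theorem is stated there without proof, as a direct citation of \cite[Theorem~5.2.4]{C-H-R} with Addenda~1 and~5, valid in any 1-h-minimal structure, which is precisely your step (ii). Your additional remarks (1-h-minimality of $T_{\mathcal{A}}$, equivalence of the two phrasings of the fibre condition, and stability of $P$-compatibility under Lipschitz refinement) are correct and merely make explicit what the paper leaves to the reference.
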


\section{Stratification of analytic sets}

In the paper~\cite{Now-Sym}, we gave an $\mathcal{L}_{\mathcal{A}}$-definable desingularization algorithm on strong analytic manifolds $M=M_{0}$ over a field $K$ with separated analytic structure. Strong analyticity is a model-theoretic strengthening of the weak concept of analyticity (treated in the classical case e.g.~by Serre~\cite{Se}) determined by a separated Weierstrass system, which works well within the definable settings.

\vspace{1ex}

We first recall some basic notions from~\cite{Now-Sym} related to the desingularization algorithm, and then shall introduce three global object, namely T-analytic manifolds, functions and subdomains (T after Tate). The concept of a T-analytic subdomain may be regarded as an analogue of that of an R-subdomain from quasi-affinoid geometry from~\cite{LR-separated}.

\vspace{1ex}

Strong analytic functions and manifolds are those analytic and definable in the structure $K$, which remain analytic in each field $L$ elementarily equivalent to $K$ in the language $\mathcal{L}_{\mathcal{A}}$. For instance, strong analytic manifolds are those obtained by means of the implicit function theorem, and the zero loci of strong analytic submersions.

\vspace{1ex}

The algorithm under consideration is used to simultaneously transform a finite number of strong analytic functions
$$ g_{i}:M \to K, \ \ i=1,\ldots,p, $$
to normal crossings, or to resolve the singularities of the hypersurface $X$ determined (in the algebraic sense) by a strong analytic function $g: M \to K$. It is a definable, non-Archimedean adaptation of the canonical desingularization by blowing up (hypersurface case) due to Bierstone--Milman~\cite{BM}. It consists of blowing up successively each mani\-fold $M_{j}$:
$$ \sigma_{j+1}: M_{j+1} \to M_{j}, \ \ j=0,\ldots,s, $$
along an admissible smooth center $C_{j}$; denote by
$$ \sigma := \sigma_{1} \circ \ldots \circ \sigma_{s}: M_{s} \to M_{0} $$
the composite of all the $\sigma_{j}$. Eventually, the pullbacks $g_{i} \circ \sigma$ are normal crossing divisors, the final transform $X'$ of $X$ under $\sigma$ is smooth (unless empty), and $X'$ and the final exceptional divisor $E'$ of $\sigma$, simultaneously have only normal crossings. The successive admissible centers are the maximum strata of a finitary invariant.


\begin{remark}\label{desing-uniform}
Observe that our $\mathcal{L}_{\mathcal{A}}$-definable desingularization algorithm works uniformly in models $L$ of the theory $T_{\mathcal{A}}$. Indeed, the analytic $\mathcal{A}$-structure on $K$ has a unique extension to an analytic $\mathcal{A}$-structure on the algebraic closure $K_{alg}$ of $K$ (cf.~\cite[Theorem~4.5.11]{C-Lip-0}). It follows from the canonical character of the algorithm that its output over $K$ is the restriction of that on $K_{alg}$. The theory of algebraically closed, non-trivially valued fields with analytic $\mathcal{A}$-structure admits quantifier elimination in the language $(0,1,+,-,\cdot, (\cdot)^{-1}, \dotplus, v)$, where $\dotplus$ denotes the operation in the value group and $v$ the valuation operation (cf.\cite[Theorem~4.5.15]{C-Lip-0} and~\cite[Theorem~4.2]{LR-uniform}). Therefore $K_{alg}$ is a prime model of this theory. Hence, by the canonical character of desingularization algorithm, its outputs over $L_{alg}$ and $K_{alg}$ are given by the same $\mathcal{L}_{\mathcal{A}}$-formula. The assertion thus follows since the output over $L$ is the restriction of that over $L_{alg}$.
\end{remark}

\begin{remark}\label{SNC}
For desingularization of terms, we shall need a stronger version of simultaneous transformation to normal crossings to the effect that locally the pullbacks
$$ g_{i}^{\sigma} := g_{i} \circ \sigma: M_{s} \to K $$
are normal crossing divisors on $M_{s}$ linearly ordered with respect to the divisibility relation. This can be obtained by a routine application of \cite[Lemma~4.7]{BM-0}.
\end{remark}


In this paper, we are interested in the desingularization of $\mathcal{L}_{\mathcal{A}}$-terms; in particular, in the desingularization of globally analytic functions $f \in A_{m,n}^{\dag}(K)$ on the spaces
$$ M_{0} = (K^{\circ})^{m} \times (K^{\circ \circ})^{n}. $$

\vspace{1ex}

The rings $A_{0,m+n}^{\dagger}(K)$  of globally analytic functions with one kind of variables have good algebraic properties; excellence, in particular. Hence the canonical desingularization process on the open unit polydisc $(K^{\circ \circ})^{m+n}$ (and likewise on its translates in $M_{0}$) outputs certain global algebro-analytic objects. All successive admissible centers $C_{j}$ and manifolds $M_{j}$ are contained in the product $M_{0} \times \mathbb{P}^{N}(K)$ with some positive integer $N$, and over $(K^{\circ \circ})^{m+n}$ (and likewise over its translates) are defined by finitely many functions from
$$ A_{0,m+n}^{\dagger}(K)[y_{0},\ldots,y_{N}] $$
being homogeneous forms with respect to the homogeneous co\-ordinates $(y_{0}:y_{1}: \ldots :y_{N})$ in $\mathbb{P}^{N}(K)$.

\vspace{1ex}

In the sequel, we denote T-analytic manifolds for the globally admissible centers $C_{j}$ and manifolds $M_{j}$ obtained in the desingularization algorithm. Further, globally
analytic functions on such $M_{j}$ or $C_{j}$ are defined as the pull-backs to $M_{j}$ of globally analytic functions $A_{m,n}^{\dagger}(K)$ on $M_{0}$, or, respectively, the restrictions of these pull-backs to $C_{j}$.

\vspace{1ex}

We define globally T-analytic functions and T-subdomains recursively as follows. Starting with any pair of globally analytic functions $f,g$ on $M_{0}$ and a suitable transformation $\sigma: M_{1} \to M_{0}$ to normal crossings from Remark~\ref{SNC}, the map
$$ (f^{\sigma} : g^{\sigma}): M_{1} \to \mathbb{P}^{1}(K) $$
is a well defined, globally analytic function. By T-subdomains $U$, at the first level of complexity, we mean finite intersections of clopen sets of the form
$$ \{ x \in M_{1}: \ |(f^{\sigma}(x) : g^{\sigma}(x))| \leq 1 \}, \ \ \{ x \in M_{1}: \ |(f^{\sigma}(x) : g^{\sigma}(x))| < 1 \}; $$
and by T-analytic funcions on $U$ we mean the restriction to $U$ of such globally analytic functions $(f^{\sigma} : g^{\sigma})$. We continue the process with any pair of T-analytic functions on a T-subdomain of the T-analytic manifold $M_{1}$, thus increasing the level of complexity.
The concept of a T-subdomain is in a sense an analogue of that of an R-subdomain from~\cite[Section~5.3]{LR-separated}, which in turn generalizes by iteration that of a quasi-rational subdomain from rigid analytic geometry. Unlike rational subdomains (cf.~\cite[Section~7.2.2]{BGR}), the complement of a T-subdomain is a finite union of T-subdomains.

\begin{remark}\label{subdomains}
T-subdomains $U$ and T-analytic functions on $U$ will be used in Section~4 in the analysis of terms with the restricted divisions $D_{0}$ and $D_{1}$.
\end{remark}

\begin{remark}
Definable desingularization on the polydiscs $(K^{\circ \circ})^{m+n}$ and their translates, along with its canonical character, can be seen from the perspective of Beth's definability theorem.
\end{remark}

Via definable resolution of singularities, we obtain the following result on stratification of analytic sets.


\begin{proposition}\label{strat-anal}
Let $X$ be a strong analytic (T-analytic, respectively) subset of a strong analytic (T-analytic) manifold $M$, i.e.\ the zero set
$$ X := \{ x \in M: f_{1}(x) = \ldots = f_{k}(x) =0 \} $$
of a finite number of strong analytic (T-analytic) functions $f_{1},\ldots, f_{k}$ on $M$. Then $X$ is a finite disjoint union of subanalytic submanifolds $V_{i}$ each of which is a one-to-one projection of a strong semianalytic (and T-analytic) submanifold $W_{i}$ of the product $M \times \mathbb{P}^{N}(K)$ for some positive integer $N$. Those one-to-one projections are restrictions of some multi-blowups with admissible smooth centers.
\end{proposition}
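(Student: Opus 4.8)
The plan is to apply the $\mathcal{L}_{\mathcal{A}}$-definable desingularization of Section~3 to the functions $f_{1},\dots,f_{k}$ --- in the strengthened form of Remark~\ref{SNC}, so that the pullbacks become normal crossing divisors that are locally linearly ordered with respect to divisibility --- and then to extract $X$ by a double induction on the pair $(\dim M,\,s)$ ordered lexicographically, where $s$ denotes the number of blow-ups performed by the canonical algorithm on the data $(M,\{f_{i}\})$. In the base case $s=0$ (which includes $\dim M=0$) the $f_{i}$ are already normal crossing divisors, locally linearly ordered by divisibility; hence, locally, $X=\bigcap_{i}\{f_{i}=0\}$ coincides with the zero set of the divisibility-minimal $f_{i}$, which is a union of coordinate hyperplanes, i.e.\ $X$ is a normal crossing subset of $M$. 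Its canonical stratification --- by the number of local components passing through a point --- partitions $X$ into finitely many smooth, locally closed, strong semianalytic (T-analytic) submanifolds $V_{i}$; a single harmless blow-up of $M$ along a smooth centre disjoint from $X$ then lifts each $V_{i}$ isomorphically to a submanifold $W_{i}\subset M\times\mathbb{P}^{N}(K)$, realizing the asserted one-to-one projection.

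For the inductive step let $s\ge 1$ and let $\sigma_{1}\colon M_{1}\to M$ be the first blow-up, along its smooth admissible centre $C_{0}\subset M$ (a strong/T-analytic submanifold with $\dim C_{0}<\dim M$), with exceptional divisor $E_{1}'=\sigma_{1}^{-1}(C_{0})$. Write $X=(X\setminus C_{0})\sqcup(X\cap C_{0})$ and treat the two parts separately. The restrictions $f_{i}|_{C_{0}}$ are strong/T-analytic functions on $C_{0}$ and $X\cap C_{0}$ is their common zero set, so the inductive hypothesis (smaller ambient dimension) yields a decomposition $X\cap C_{0}=\bigsqcup_{i}V_{i}'$ together with a multi-blow-up $\beta'\colon C_{0}'\to C_{0}$ and submanifolds $W_{i}'\subset C_{0}\times\mathbb{P}^{N'}(K)$ with $\beta'|_{W_{i}'}$ one-to-one onto $V_{i}'$. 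To transport this back into $M$, one blows up $M$ successively along the successive strict transforms of the centres of $\beta'$ --- each a smooth submanifold of the corresponding blown-up manifold; in the resulting multi-blow-up $\gamma\colon M'\to M$ the strict transform of $C_{0}$ is canonically $C_{0}'$, so $W_{i}'\subset C_{0}'\subset M'\subset M\times\mathbb{P}^{N}(K)$ and $\gamma|_{W_{i}'}$ is precisely the desired one-to-one map $W_{i}'\to V_{i}'\subset X$.

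Next, the set $\sigma_{1}^{-1}(X)=\{f_{i}\circ\sigma_{1}=0\}$ is a closed strong/T-analytic subset of $M_{1}$ whose Section~3 desingularization uses exactly the remaining blow-ups $\sigma_{2},\dots,\sigma_{s}$, by the canonical (``memoryless modulo the marked exceptional divisor $E_{1}'$'') character of the algorithm. The inductive hypothesis (same ambient dimension, strictly smaller $s$) then gives $\sigma_{1}^{-1}(X)=\bigsqcup_{i}\widehat V_{i}$, a multi-blow-up $\widehat\beta\colon M_{1}'\to M_{1}$ and submanifolds $\widehat W_{i}\subset M_{1}\times\mathbb{P}^{\widehat N}(K)$ with $\widehat\beta|_{\widehat W_{i}}$ an isomorphism onto $\widehat V_{i}$. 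Since $\sigma_{1}$ restricts to an isomorphism $M_{1}\setminus E_{1}'\to M\setminus C_{0}$, intersecting with this open set we obtain smooth submanifolds $\widehat V_{i}^{\circ}=\widehat V_{i}\setminus E_{1}'$ and $\widehat W_{i}^{\circ}=\widehat W_{i}\cap\widehat\beta^{-1}(M_{1}\setminus E_{1}')$, the latter lying in $M_{1}\times\mathbb{P}^{\widehat N}(K)$ and hence, after a Segre re-embedding, in $M\times\mathbb{P}^{N}(K)$ for a suitable $N$; the composite $\sigma_{1}\circ\widehat\beta\colon M_{1}'\to M$ is again a multi-blow-up with smooth centres, and its restriction to $\widehat W_{i}^{\circ}$ is one-to-one onto $\sigma_{1}(\widehat V_{i}^{\circ})\subset X\setminus C_{0}$. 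As $\bigsqcup_{i}\widehat V_{i}^{\circ}=\sigma_{1}^{-1}(X)\setminus E_{1}'$ maps isomorphically onto $X\setminus C_{0}$, these pieces exhaust $X\setminus C_{0}$ disjointly, and together with the $V_{i}'$ they give the asserted decomposition of $X$, all parts being pairwise disjoint.

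I expect the main work to be of a bookkeeping nature. One must run the Section~3 algorithm in its relative form --- ambient manifold, functions, and a marked normal crossing divisor --- so that ``number of remaining blow-ups'' is a legitimate induction parameter; one must check that blowing up $M$ along the successive strict transforms of the centres of the auxiliary multi-blow-up of $C_{0}$ is itself a multi-blow-up with smooth centres reproducing that multi-blow-up on the strict transform of $C_{0}$, while tracking the normal crossing relations with the accumulating exceptional divisors; and one must handle the Segre re-embeddings of iterated products of projective spaces into a single $\mathbb{P}^{N}(K)$ and verify that all the strata, transported submanifolds and maps remain within the category of strong semianalytic (T-analytic) objects. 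The one genuinely structural point --- that a blow-up collapses positive-dimensional projective fibres over its centre, so that a naive push-forward of a stratum need not be injective --- is sidestepped throughout: over $C_{0}$ the argument takes place inside $C_{0}$ itself, and away from $C_{0}$ only the isomorphism $M_{1}\setminus E_{1}'\cong M\setminus C_{0}$ is used.
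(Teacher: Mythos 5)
Your argument is essentially sound, but it is organized quite differently from the paper's proof and is considerably heavier. The paper does not induct on the number of blow-ups at all: for $k=1$ it runs the canonical resolution of the hypersurface $X=Z(f_{1})$ once and reads the stratification directly off the tower, writing
$$ X \;=\; C_{0}\,\cup\,\sigma_{1}(C_{1}\setminus E_{1})\,\cup\,\dots\,\cup\,\sigma_{\leq s-1}(C_{s-1}\setminus E_{s-1})\,\cup\,\sigma_{\leq s}(\widetilde{X}\setminus E_{s}), $$
a disjoint union because each admissible centre $C_{j}$ lies in the $j$-th strict transform and $\sigma_{\leq j}$ is injective off $E_{j}$; each piece is already a smooth submanifold of some $M_{j}\subset M\times\mathbb{P}^{N}(K)$, so no transport of auxiliary resolutions back into $M$ is needed. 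For $k\geq 2$ the paper inducts on $k$, intersecting this decomposition with $Y=Z(f_{2})$ and resolving the hypersurfaces $C_{j}\cap Y$ inside the manifolds $C_{j}$. By contrast, you treat all $k$ functions simultaneously via Remark~\ref{SNC} and run a lexicographic induction on $(\dim M,s)$; this buys a uniform treatment of general $k$ (your base case reduces to the combinatorial stratification of a normal crossing set), but at the price of the one point you yourself flag: to make ``number of remaining blow-ups'' a legitimate induction parameter you must use the algorithm in its relative form, with the exceptional divisor $E_{1}'$ as marked input data, since the algorithm applied afresh to the total transforms $f_{i}\circ\sigma_{1}$ would in general not reproduce $\sigma_{2},\dots,\sigma_{s}$. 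That relative form is indeed how the Bierstone--Milman algorithm (and its definable adaptation in~\cite{Now-Sym}) is set up, so this is a verifiable bookkeeping burden rather than a fatal gap --- but the paper's direct read-off makes it, together with your strict-transform transport of the resolution of $C_{0}$ and the attendant Segre re-embeddings, entirely unnecessary.
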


\begin{proof}
Consider first the case $k=1$ and resolve singularities of the hypersurface $X = X_{1}$. Adopt the notation of Section~2 and put
$$ \sigma_{\leq j} := \sigma_{1} \circ \sigma_{2} \circ \ldots \circ \sigma_{j}, \ \  j=2,3,\ldots,s. $$
Let $E_{j}$ denote the set of exceptional hypersurfaces in $M_{j}$ of the multi-blowup $\sigma_{\leq j}$. Notice that $C_{j}$ and $E_{j}$ simultaneously have only normal crossings (cf.~\cite{BM,Now-Sym}). Then $X$ is the disjoint union of the sets
$$ C_{0} \cup \sigma_{1}(C_{1} \setminus E_{1}) \cup \sigma_{\leq 2}(C_{2} \setminus E_{2}) \cup \ldots \cup \sigma_{\leq s-1}(C_{s-1} \setminus E_{s-1}) \cup
   \sigma_{s}(\widetilde{X} \setminus E_{s}). $$
Thus we are done because the restriction of $\sigma_{\leq j}$ to the complement of $E_{j}$ is one-to-one.

\vspace{1ex}

Further proceed with induction on $k$. For instance, consider the case $k=2$, and two analytic hypersurfaces $X = X_{1}$ and $Y = X_{2}$ defined by two analytic functions $f_{1}$ and $f_{2}$, respectively. Then $X \cap Y$ is the disjoint union of the sets
$$ (C_{0} \cap Y) \cup \sigma_{1}((C_{1} \cap Y) \setminus E_{1}) \cup \sigma_{\leq 2}((C_{2} \cap Y) \setminus E_{2}) \cup \; \ldots  $$
$$ \ldots \; \cup \sigma_{\leq s-1}((C_{s-1} \cap Y) \setminus E_{s-1}) \cup \sigma_{s}((\widetilde{X} \cap Y) \setminus E_{s}). $$
Now resolve singularities of each hypersurface $C_{j} \cap Y$ defined on the analytic manifold $C_{j}$ by the restriction of $f_{2}$, and proceed in the same way as for $k=1$.
\end{proof}

\section{Term desingularization and stratification of sets}

The main aim of this section is to achieve a certain strong stratification of definable sets, whose proof is based on a simultaneous desingularization of terms and on elimination of valued field quantifiers.
The purpose of simultaneous desingularization of a finite number of $\mathcal{L}_{\mathcal{A}}$-terms $t_{1}(x),\ldots,t_{p}(x)$ is, in turn, to transform them into T-analytic functions through one-to-one strong analytic maps defined on a finite number of T-analytic manifolds coming from resolution of singularities.

\vspace{1ex}

We begin with some preparatory observations regarding the necessary concepts involved in our approach.

\vspace{1ex}

Every $\mathcal{L}_{\mathcal{A}}$-term $t(x)$ is a nested composition of analytic functions from the system $\mathcal{A}$, the multiplicative inverse and polynomials over $K$. However, it is more convenient for desingularization to consider the valued ring $K^{\circ}$ with distinguished maximal ideal $K^{\circ \circ}$ as an auxiliary sort, and to replace the multiplicative inverse $(\cdot)^{-1}$ with two restricted divisions
$$ D_{0}: K^{\circ} \times K^{\circ} \to K^{\circ} \ \ \text{and} \ \ D_{1}: K^{\circ \circ} \times K^{\circ \circ} \to K^{\circ \circ} $$
defined by the formulae
$$ D_{0}(x,y) := \left\{ \begin{array}{cl}
                        x/y & \mbox{ if } \ |x| \leq |y| \neq 0 \\
                        0 & \mbox{ otherwise } \
                        \end{array}
               \right.
$$
and
$$ D_{1}(x,y) := \left\{ \begin{array}{cl}
                        x/y & \mbox{ if } \ |x| < |y| \\
                        0 & \mbox{ otherwise. } \
                        \end{array}
               \right.
$$
The first division is a term of the valued ring sort and the other is a term of the maximal ideal sort. Two restricted divisions are needed because of two kinds of variables that occur in separated analytic structures.

\vspace{1ex}

Denote by $\mathcal{L}_{\mathcal{A}}^{D}$ the analytic language $\mathcal{L}_{\mathcal{A}}$ with the unary function symbol $(\cdot)^{-1}$ replaced with the two binary function symbols $D_{0}$ and $D_{1}$. An $\mathcal{L}_{\mathcal{A}}^{D}$-term of either of these sorts is referred to as a $D$-function. The intended interpretation of the languages $\mathcal{L}_{\mathcal{A}}$ and $\mathcal{L}_{\mathcal{A}}^{D}$ are the valued field $K$ and the valued ring $K^{\circ}$, respectively. Notice that $D$-functions consists only of analytic functions and restricted divisions. Indeed, algebraic operations are performed on elements from the ring $K^{\circ}$ or its maximal ideal $K^{\circ \circ}$, thus polynomials can be treated as analytic functions from the Weierstrass system.

\vspace{1ex}

The reduction of the analysis of $\mathcal{L}_{\mathcal{A}}$-terms to $\mathcal{L}_{\mathcal{A}}^{D}$-terms is allowed because:

\vspace{1ex}

$\bullet$ firstly, $K^{n}$ is a disjoint union of Cartesian products of some discs $|x_{i}| \leq 1$ and annuli $|x_{j}| >1$ which are mapped into discs $|x_{j}| \leq 1$ by the multiplicative inverse $(x_{j})^{-1}$;

$\bullet$ secondly, every $\mathcal{L}_{\mathcal{A}}$-term on $(K^{\circ})^{n}$ with valuation at most $1$ is equivalent to an $\mathcal{L}_{\mathcal{A}}^{D}$-term.

\vspace{1ex}

The latter fact follows easily by induction on the number of occurrences of function symbols from the system $\mathcal{A}$. Indeed, given some terms $t(x)$, $t_{1}(x)$ and $t_{2}(x)$, one can combine them piecewise, making use of the characteristic functions of the sets defined by atomic formulae of the form:
$$ t(x) = 0, \ \ t(x) \neq 0, \ \ |t_{1}| \leq |t_{2}(x)|, \ \ |t_{1}| < |t_{2}(x)|. $$
To this end, one can use the two characteristic functions:
$$ \chi(\{ x \in K^{0}: \ x \neq 0 \}) = D_{0}(x,x) $$
and
$$ \chi(\{ (x,y) \in (K^{0})^{2}: \ 0 \neq |x| \leq |y| \}) = D_{0}(D_{0}(x,y),D_{0}(x,y)). $$

\vspace{1ex}

\begin{remark}\label{QE}
Likewise, any subset of the closed unit polydisc $(K^{\circ})^{n}$ is quantifier-free $\mathcal{L}_{\mathcal{A}}$-definable if and only if it is quantifier-free $\mathcal{L}_{\mathcal{A}}^{D}$-definable (cf.~\cite[Remark~3.2.12]{C-Lip}).
\end{remark}

At this stage, we can readily achieve simultaneous desingularization of terms.

\begin{theorem}\label{desing-th}
Consider a finite number of $D$-functions $t_{1}(x),\ldots, t_{p}(x)$ on $(K^{\circ})^{n}$. Then $(K^{\circ})^{n}$ is a finite disjoint union of strong analytic, subanalytic manifolds $S_{k} := \sigma_{k}(V_{k})$, $k=1,\ldots,s$, such that:

1) Each $\sigma_{k}: N_{k} \to (K^{\circ})^{n}$ is a multi-blowup with exceptional divisor $E_{k}$ from the transformation to normal crossings on $(K^{\circ})^{n}$ or on one of its admissible smooth centers $C$ (as in Proposition~\ref{strat-anal}). We can assume that each $N_{k}$ is a T-analytic submanifolds of $(K^{\circ})^{n} \times \mathbb{P}^{N}(K)$ for some integer $N$ large enough, and that $\sigma_{k}$ is the projection onto $(K^{\circ})^{n}$.

2) Each $U_{k} \subset N_{k}$ is a clopen T-subdomain of $N_{k}$ or of the pre-image of $C$, and $V_{k} := U_{k} \setminus E_{k}$.

3) Finally, each pull-back
$$ t_{i}^{\sigma_{k}} = t_{i} \circ \sigma_{k}, \ \ i=1,\ldots,p, \ k=1,\ldots,s $$
is the restriction to $V_{k}$ of a T-analytic function $\omega_{i,k}$ on $U_{k}$.
  \hspace*{\fill} $\Box$
\end{theorem}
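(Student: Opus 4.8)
The plan is to carry out the double induction announced in the paragraph preceding the statement, the outer induction on the total number $N_D := \sum_{i=1}^p (\text{number of occurrences of }D_0, D_1\text{ in }t_i)$ and the inner induction on the dimension of the ambient T-analytic manifold. The base case $N_D = 0$ is immediate: a $D$-function with no restricted divisions is, by the discussion in this section, built only from analytic functions of the Weierstrass system (polynomials being absorbed into $\mathcal{A}$), hence is already a global analytic function on $(K^\circ)^n$, so one takes $s = 1$, $\sigma_1 = \mathrm{id}$, $U_1 = N_1 = (K^\circ)^n$, $E_1 = \emptyset$, $V_1 = (K^\circ)^n$, and $\omega_{i,1} = t_i$. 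Already here, however, one should first perform the clopen partition of $(K^\circ)^n$ into the pieces $\{|x_i| < 1,\ i\in I;\ |x_j| = 1,\ j\in J\}$ recorded above, so that each piece, after composing the coordinates $x_j$ with the multiplicative inverse, becomes (a translate contained in) a polydisc $(K^{\circ\circ})^{m+n'}$, which is where both quantifier elimination and Proposition~\ref{strat-anal} are available.

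For the inductive step, pick a subterm of some $t_i$ of the form $D_0(f,g)$ or $D_1(f,g)$ with $f,g$ of strictly lower $D$-complexity; by the outer induction hypothesis applied to the finite list consisting of all these $f$'s and $g$'s together with all the remaining lower-complexity subterms, one may assume that after a multi-blowup $\sigma:\widetilde{N}\to N$ all of $f^\sigma, g^\sigma$ (and the other subterms) are already T-analytic on clopen T-subdomains $U\subset\widetilde N$, and that $\sigma$ is one-to-one off its exceptional divisor $E$ by the closedness theorem. Now apply the strengthened transformation to normal crossings of Remark~\ref{SNC} to the pair $f^\sigma, g^\sigma$: after a further multi-blowup the divisor relation between them becomes linear, so on the T-subdomain $\{|(f:g)|\le 1\}\setminus Z(g)$ the quotient $f/g$ equals the T-analytic function $(f^\sigma:g^\sigma)$, and on $\{|(f:g)|<1\}$ likewise for $D_1$; meanwhile on $Z(g)$ both restricted divisions are identically $0$. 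This removes one occurrence of a restricted division at the cost of (i) the new lower-dimensional strata $Z(g^\sigma)\cup E$ and (ii) the passage to the T-subdomains $U$ cut out by the conditions $|(f:g)|\,\lozenge\,1$. For (i), invoke Proposition~\ref{strat-anal} (``in extenso'', i.e.\ including that every divider $g$ may be arranged to vanish identically on any stratum or nowhere on it) to write $Z(g^\sigma)\cup E$ as a finite disjoint union of one-to-one projections of T-analytic submanifolds of strictly smaller dimension, and apply the inner (dimension) induction hypothesis there. The four bulleted features — step-by-step treatment, only clopen T-subdomains get blown up, lower-dimensional partitions only serve the dimension induction, and later blowups do not spoil earlier desingularizations (because a T-analytic function pulls back to a T-analytic function, and T-subdomains pull back to T-subdomains) — are exactly what guarantee that the process terminates and that items 1), 2), 3) hold with the $N_k, U_k, E_k, V_k = U_k\setminus E_k$ and T-analytic functions $\omega_{i,k}$ accumulated along the way.

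To package this cleanly one defines, at each stage, a \emph{state} consisting of a finite partition of $(K^\circ)^n$ into pieces, each piece being $\sigma(U\setminus E)$ for a multi-blowup $\sigma:N\to(K^\circ)^n$ as in 1), a clopen T-subdomain $U\subset N$, and a tuple of $D$-functions on $U$ into which each $t_i^\sigma$ has been rewritten, carrying a complexity vector (max over $i$ of the number of $D$'s, together with the dimension of $N$) that strictly decreases in the lexicographic order at every step; since this order is well-founded, after finitely many steps every piece carries $D$-functions with no remaining restricted divisions, which are then T-analytic on $U$ by the base case, yielding exactly the conclusion. The main obstacle, and the place where Section~3 is doing real work, is ensuring that the objects stay inside the category of \emph{global} T-analytic functions and manifolds throughout: the blowups must be performed along the global admissible centers furnished by the canonical Bierstone--Milman algorithm on the polydiscs (exploiting excellence of $A_{0,m+n}^\dagger(K)$), the linear-ordering refinement of normal crossings from Remark~\ref{SNC} must be available for the pair $f^\sigma,g^\sigma$ on an arbitrary T-subdomain rather than only on $M_0$, and one needs Remark~\ref{desing-uniform} to know the whole construction is $\mathcal{L}_{\mathcal A}$-definable. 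Checking that T-subdomains and T-analytic functions are stable under the pullbacks produced at each step — so that ``continuing the process does not spoil'' what was already achieved — is the bookkeeping core of the argument, but it is forced by the recursive definition of T-subdomains given just above Remark~\ref{subdomains}, and the closedness theorem supplies the one-to-one/homeomorphism property off the exceptional divisor that makes the resulting $S_k = \sigma_k(V_k)$ genuine subanalytic submanifolds.
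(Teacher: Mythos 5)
Your proposal follows essentially the same route as the paper: the paper proves Theorem~\ref{desing-th} by exactly the double induction you describe (on the number of occurrences of $D_{0},D_{1}$ and on the dimension of the ambient T-analytic manifold), with the same single-step analysis via Remark~\ref{SNC}, the same treatment of $Z(g^{\sigma})\cup E$ through Proposition~\ref{strat-anal}, and the same appeal to the closedness theorem for injectivity off the exceptional divisor. Your explicit ``state'' with a lexicographically decreasing complexity vector merely makes precise the termination argument that the paper leaves implicit in its bulleted list of features, so there is no substantive difference.
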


\begin{proof}
We shall proceed with double induction on the number of occurrences of restricted divisions $D_{0}$ and $D_{1}$ in the $\mathcal{L}_{\mathcal{A}}^{D}$-terms under study, and on the dimension of the ambient T-analytic manifold.

\vspace{1ex}

Desingularization will consist of alternate partitioning the domain of the $D$-functions already obtained, and blowing up the resulting parts along definable smooth centers coming from transformation to normal crossings.

\vspace{1ex}

Two kinds of partitions of a resulting T-analytic manifold $N$ will be considered at each stage of the process. One is with respect to the admissible center $C \subset N$ of the subsequent blowup (or, equivalently, its exceptional divisor) and to the zero sets
$$ Z(g) := \{ x \in N: g(x)=0 \} $$
of the T-analytic functions $g$ occurring as divisors in the restricted divisions from the $D$-functions on $N$ in question. By Proposition~\ref{strat-anal} in extenso, we can thus assume that every such $g$ either vanishes nowhere or vanishes on any set of the partition.

\vspace{1ex}

The other one is with respect to clopen T-subdomains $U$ of a T-analytic manifold $N$, considered at a stage of the process, which are of the form
$$ U = \{ x \in N: \ |(f(x) : g(x))| \, \lozenge \, 1 \}, $$
where $f,g$ are T-analytic functions, and $\lozenge \in \{ \leq, < \}$, depending on whether $f$ substitutes for a variable varying over the closed or open unit disc.

\vspace{1ex}

Starting with $N = (K^{\circ})^{n}$, since two kinds of variables occur, we must partition the polydisc  $(K^{\circ})^{n}$ into clopen T-subdomains of the form
$$ \{ x \in (K^{\circ})^{n}: \ |x_{i}|<1, \ |x_{j}|=1, \ i \in I, \ j \in J \}, $$
where $I,J$ is a partition of $\{ 1,\ldots,n \}$ .

\vspace{1ex}

To understand better the idea behind the desingularization of $D$-functions, consider a single step of the process, which consists in resolving a single occurrence in a $D$-function $t$ of restricted divisions: $D_{0}(f,g)$ or $D_{1}(f,g)$ with T-analytic functions $f,g$ defined on a clopen T-subdomain $U$.

\vspace{1ex}

By means of the stronger version of transformation to normal crossings from Remark~\ref{SNC}, we see that the pullbacks
$$ D_{0}(f^{\sigma},g^{\sigma}) \ \ \text{and} \ \ D_{1}(f^{\sigma},g^{\sigma}) $$
under a suitable multi-blowup $\sigma: \widetilde{U} \to U$ are T-analytic (Remark~\ref{subdomains}) on the clopen T-subdomains of $\widetilde{U}$:
$$ \{ y \in \widetilde{U}: \ |(f(y) : g(y))| \leq 1 \} \setminus Z(g) $$
and
$$ \{ y \in \widetilde{U}: \ |(f(y) : g(y))| < 1 \}, $$
respectively.  Of course, we have to separately treat the $D$-function over the zero set $Z(g^{\sigma}) \cap \Omega$ and over the exceptional divisor $E \cap \Omega$ of the multi-blowup $\sigma$. The exceptional divisor must be treated separately since desingularization is supposed to be injective. Stratifying the T-analytic set $Z(g^{\sigma}) \cup E$ (Proposition~\ref{strat-anal}), we are done by induction on the dimension of the ambient manifold.

\vspace{1ex}

Below are five basic features of the desingularization process:

\vspace{1ex}

$\bullet$ it can be performed step by step for each term;

$\bullet$ only clopen T-subdomains $U$ of $N$ as above are blown up;

$\bullet$ partitions with lower-dimensional T-analytic subsets merely serve to stratify the T-subdomains already constructed, and enable induction on the dimension of the ambient manifold;

$\bullet$ continuing the process does not spoil the desingularization of lower complexity subterms already achieved;

$\bullet$ by the closedness theorem (cf.~\cite[Theorem~1.3]{Now-APAL}), every multi-blowup $\sigma: \widetilde{N} \to N$ is a definably closed map, and thus its restriction to the complement of the exceptional divisor $E$ is a homeomorphism onto the image which is an open subset of $N$.

\vspace{1ex}

Now, the conclusion of the theorem follows directly from the analysis of a single resolution step and the five features of the desingularization process given above.
\end{proof}

By Proposition~\ref{strat-anal}, we immediately obtain the following

\begin{corollary}\label{desing-cor}
Consider a finite number of $\mathcal{L}_{\mathcal{A}}$-terms $t_{1}(x),\ldots, t_{p}(x)$ on $(K^{\circ})^{n}$.
Then $(K^{\circ})^{n}$ is a finite disjoint union of strong analytic, subanalytic manifolds $S_{k} := \sigma_{k}(V_{k})$, $k=1,\ldots,s$, such that:

1) Each $\sigma_{k}: N_{k} \to (K^{\circ})^{n}$ is a multi-blowup with exceptional divisor $E_{k}$ from the transformation to normal crossings as in the above theorem.

2) Each $U_{k} \subset N_{k}$ is a clopen T-subdomain of $N_{k}$ or of the pre-image of an admissible smooth center $C$, and $V_{k} := U_{k} \setminus E_{k}$.

3) Finally, each pull-back
$$ t_{i}^{\sigma_{k}} = t_{i} \circ \sigma_{k}, \ \ i=1,\ldots,p, \ k=1,\ldots,s $$
is the restriction to $V_{k}$ of either a T-analytic function $\omega_{i,k}$ on $U_{k}$ or its multiplicative inverse. In the latter case, $\omega_{i,k}$ vanishes nowhere on $V_{k}$.
\end{corollary}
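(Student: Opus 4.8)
The plan is to reduce the statement for arbitrary $\mathcal{L}_{\mathcal{A}}$-terms to Theorem~\ref{desing-th}, which proves it for $D$-functions, and then to transport the conclusion back along the reduction. First, for each $i$ I would split the polydisc into the two pieces
$$ X_{i}^{\leq} := \{\, x \in (K^{\circ})^{n}: |t_{i}(x)| \leq 1 \,\} \quad \text{and} \quad X_{i}^{>} := \{\, x \in (K^{\circ})^{n}: |t_{i}(x)| > 1 \,\}. $$
On $X_{i}^{\leq}$ the term $t_{i}$ has valuation $\leq 1$, whereas on $X_{i}^{>}$ the term $t_{i}$ is nowhere zero and $t_{i}^{-1}$ has valuation $<1$. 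Hence, using the reduction of $\mathcal{L}_{\mathcal{A}}$-terms to $\mathcal{L}_{\mathcal{A}}^{D}$-terms recalled in Section~4 together with Remark~\ref{QE} (by which the characteristic function $\chi_{i}$ of $X_{i}^{\leq}$ is a $D$-function), I would build a $D$-function $\widetilde{t}_{i}$ on $(K^{\circ})^{n}$ coinciding with $t_{i}$ on $X_{i}^{\leq}$ and with $t_{i}^{-1}$ on $X_{i}^{>}$, together with the $D$-function $\chi_{i}$.

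Next I would apply Theorem~\ref{desing-th} to the finite family of $D$-functions $\widetilde{t}_{1},\ldots,\widetilde{t}_{p},\chi_{1},\ldots,\chi_{p}$, getting a finite disjoint decomposition $(K^{\circ})^{n} = \bigsqcup_{k=1}^{s} S_{k}$ with $S_{k} = \sigma_{k}(V_{k})$, $V_{k} = U_{k} \setminus E_{k}$, and $\sigma_{k}, N_{k}, U_{k}, E_{k}$ exactly as in items~1) and~2), together with T-analytic functions $\omega_{i,k}$ and $\gamma_{i,k}$ on $U_{k}$ such that $\widetilde{t}_{i}^{\sigma_{k}} = \omega_{i,k}|_{V_{k}}$ and $\chi_{i}^{\sigma_{k}} = \gamma_{i,k}|_{V_{k}}$. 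Since $\chi_{i}$ is $\{0,1\}$-valued and $V_{k}$ is open and dense in $U_{k}$, the T-analytic function $\gamma_{i,k}$ is $\{0,1\}$-valued and locally constant on $U_{k}$; cutting each $U_{k}$ along the clopen T-subdomain $\{\, x \in U_{k}: |\gamma_{i,k}(x)| < 1 \,\}$ and its complement, and passing to the common refinement over $i$ — which, with Proposition~\ref{strat-anal}, keeps the $S_{k}$ strong analytic, subanalytic manifolds obtained as one-to-one projections of submanifolds of $N_{k}$ — I may assume in addition that each $\chi_{i}$ is constant on $S_{k}$, i.e.\ that for every $i$ either $S_{k} \subseteq X_{i}^{\leq}$ or $S_{k} \subseteq X_{i}^{>}$.

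Then it remains to read off item~3). Fix $k$ and $i$. If $S_{k} \subseteq X_{i}^{\leq}$, then $\widetilde{t}_{i} = t_{i}$ on $S_{k}$, so $t_{i}^{\sigma_{k}} = \widetilde{t}_{i}^{\sigma_{k}} = \omega_{i,k}|_{V_{k}}$ is the restriction to $V_{k}$ of the T-analytic function $\omega_{i,k}$ on $U_{k}$. If $S_{k} \subseteq X_{i}^{>}$, then $\widetilde{t}_{i} = t_{i}^{-1}$ on $S_{k}$; since $t_{i}$ is nowhere zero on $S_{k}$, the function $\omega_{i,k}|_{V_{k}} = t_{i}^{-1}\circ\sigma_{k}$ is nowhere zero on $V_{k}$, i.e.\ $\omega_{i,k}$ vanishes nowhere on $V_{k}$; and since $(\cdot)^{-1}$ is an involution under the convention $0^{-1}=0$, we get $t_{i}^{\sigma_{k}} = (t_{i}^{-1}\circ\sigma_{k})^{-1} = \omega_{i,k}^{-1}|_{V_{k}}$, the restriction to $V_{k}$ of the multiplicative inverse of $\omega_{i,k}$. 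This is precisely the dichotomy asserted in~3), and the structural statements about the $S_{k}$, the $N_{k}$ and the $\sigma_{k}$ are inherited from Theorem~\ref{desing-th} and Proposition~\ref{strat-anal}.

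The step I expect to be the main obstacle is the refinement in the second paragraph: one must guarantee that on each final piece $S_{k}$ the alternative $|t_{i}| \leq 1$ versus $|t_{i}| > 1$ is resolved \emph{simultaneously and uniformly} for every $i$, so that the description ``a T-analytic function or its multiplicative inverse'' is unambiguous. This is exactly why the characteristic functions $\chi_{i}$ are fed into Theorem~\ref{desing-th} (which is legitimate by Remark~\ref{QE}) and why the additional stratification via Proposition~\ref{strat-anal} is invoked; once this bookkeeping is settled, the remainder is a routine unwinding of Theorem~\ref{desing-th}.
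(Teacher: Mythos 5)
Your proposal is correct and takes essentially the same route the paper intends: the paper presents the corollary as an immediate consequence of Theorem~\ref{desing-th} combined with the reduction of $\mathcal{L}_{\mathcal{A}}$-terms to $D$-functions described at the start of Section~4 (the dichotomy $|t_{i}|\leq 1$ versus $|t_{i}|>1$, handled by the multiplicative inverse and the $D$-function characteristic functions) and with Proposition~\ref{strat-anal}. You have simply made that bookkeeping explicit, including the refinement ensuring the alternative is resolved uniformly on each stratum.
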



By elimination of valued field quantifiers (along with Remark~\ref{QE}), every
$\mathcal{L}_{\mathcal{A}}$-definable subset $X$ of $(K^{\circ})^{n}$ is a set of the form
$$ X := \left\{ x \in (K^{\circ})^{n}: (\mathrm{rv}\, t_{1}(x), \ldots, \mathrm{rv}\, t_{p}(x)) \in B \right\}, $$
where $B$ is an $\mathcal{L}_{\mathcal{A}}$-definable subset of $RV(K)^{p}$ and $t_{1},\ldots ,t_{p}$ are $D$-functions. Therefore it is not difficult to check that, moreover, every such $X$ is a finite union of sets of the form
$$ Y := \left\{ x \in (K^{\circ})^{n}: (\mathrm{rv}\, t_{1}(x), \ldots, \mathrm{rv}\, t_{p}(x)) \in B \right\}, $$
where $B$ is a definable subset of $RV(K)^{q}$ with
$$ B = A \times  \{ 0 \}^{p-q} \subset (RV(K) \setminus \{ 0 \})^{q} \times \{ 0 \}^{p-q}. $$
Then
$$ Y = \left\{ x \in (K^{\circ})^{n}: \ (\mathrm{rv}\, t_{1}(x), \ldots, \mathrm{rv}\, t_{q}(x)) \in A, \ \  \right. $$
$$ \left. t_{q+1}(x)= \ldots , = t_{p}(x) = 0 \right\}. $$
Therefore, by Theorem~\ref{desing-th}, we obtain

\begin{corollary}\label{strat-half}
Using the above assumptions and notation, every
$\mathcal{L}_{\mathcal{A}}$-definable subset $X$ of $(K^{\circ})^{n}$ is a finite disjoint union of sets of the form
$$ \sigma_{k} \left( \left\{ y \in V_{k}: \ (\mathrm{rv}\, \omega_{1,k}(y), \ldots, \mathrm{rv}\, \omega_{q,k}(y)) \in A \right\} \cap \right. $$
$$ \left. \cap \left\{ y \in V_{k}: \  \omega_{q+1,k}(y)= \ldots , = \omega_{p,k}(y) = 0 \right\} \right). $$
Further, while the first set of the intersection is an open $\mathcal{L}_{\mathcal{A}}$-definable subset $\Omega_{k}$ of $V_{k}$, the second one is a T-analytic subset of $V_{k}$ and, more precisely, the trace on $V_{k}$ of the T-analytic subset on the clopen T-subdomain $U_{k}$ defined by the T-analytic functions $\omega_{q+1,k}, \ldots , \omega_{p,k}$.
  \hspace*{\fill} $\Box$
\end{corollary}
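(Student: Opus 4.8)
The plan is to feed the reduction already carried out above directly into Theorem~\ref{desing-th} and then check the two regularity statements in the ``Further'' clause. Since this is a corollary of Theorem~\ref{desing-th}, there is no serious obstacle here, only bookkeeping.

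Above, $X$ has been written as a finite disjoint union of sets
$$ Y = \{ x \in (K^{\circ})^{n}: (\mathrm{rv}\, t_{1}(x), \ldots, \mathrm{rv}\, t_{q}(x)) \in A, \ t_{q+1}(x) = \cdots = t_{p}(x) = 0 \}, $$
all of them cut out by the \emph{same} list of $D$-functions $t_{1},\ldots,t_{p}$ (the pieces $Y$ arising by slicing the parameter set $B \subseteq RV(K)^{p}$ according to which coordinates vanish, after relabelling so that the vanishing ones come last), and with $A \subseteq (RV(K)\setminus\{0\})^{q}$. I would apply Theorem~\ref{desing-th} once to $t_{1},\ldots,t_{p}$, obtaining the finite disjoint decomposition $(K^{\circ})^{n} = \bigsqcup_{k} S_{k}$, $S_{k} = \sigma_{k}(V_{k})$, together with the multi-blowups $\sigma_{k}: N_{k} \to (K^{\circ})^{n}$, the clopen T-subdomains $U_{k}$, the sets $V_{k} = U_{k} \setminus E_{k}$, and the T-analytic functions $\omega_{i,k}$ on $U_{k}$ satisfying $t_{i}\circ\sigma_{k} = \omega_{i,k}\vert_{V_{k}}$.

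Next I would intersect each $Y$ with this decomposition. By the closedness theorem, as noted just before Theorem~\ref{desing-th}, $\sigma_{k}$ restricts to a homeomorphism of $V_{k}$ onto $S_{k}$; since the $S_{k}$ are pairwise disjoint, $Y = \bigsqcup_{k} \sigma_{k}\big(\sigma_{k}^{-1}(Y)\cap V_{k}\big)$ is again a disjoint union. For $y \in V_{k}$ one has $\mathrm{rv}\, t_{i}(\sigma_{k}(y)) = \mathrm{rv}\,\omega_{i,k}(y)$, so
$$ \sigma_{k}^{-1}(Y)\cap V_{k} = \{ y \in V_{k}: (\mathrm{rv}\,\omega_{1,k}(y),\ldots,\mathrm{rv}\,\omega_{q,k}(y)) \in A \} \cap \{ y \in V_{k}: \omega_{q+1,k}(y) = \cdots = \omega_{p,k}(y) = 0 \}, $$
which is exactly the set displayed in the statement. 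Letting $Y$ and $k$ range then exhibits $X$ as the asserted finite disjoint union (with $q$ and the labelling of the $\omega_{i,k}$ depending on the piece).

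It remains to verify the ``Further'' assertions. By part~3) of Theorem~\ref{desing-th} the functions $\omega_{q+1,k},\ldots,\omega_{p,k}$ are T-analytic on $U_{k}$, so the second set in the intersection is the trace on $V_{k}$ of the T-analytic subset of $U_{k}$ they define. For the first set $\Omega_{k}$, the condition $A \subseteq (RV(K)\setminus\{0\})^{q}$ forces $\omega_{1,k},\ldots,\omega_{q,k}$ to be nonvanishing on $\Omega_{k}$; on the open set $W_{k} := \bigcap_{i=1}^{q}\{ y \in V_{k} : \omega_{i,k}(y)\neq 0 \}$ each map $y \mapsto \mathrm{rv}\,\omega_{i,k}(y)$ is locally constant, since $\omega_{i,k}$ is continuous and $\mathrm{rv}$ is locally constant on $K^{\times}$ (its fibres being open balls). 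Hence $\Omega_{k}$, being the preimage of $A$ under the resulting locally constant map $W_{k} \to (RV(K)\setminus\{0\})^{q}$, is open in $W_{k}$ and therefore in $V_{k}$; and it is $\mathcal{L}_{\mathcal{A}}$-definable because $\sigma_{k}$, $U_{k}$, $E_{k}$, the $\omega_{i,k}$ and $A$ all are. The only point requiring a little care is to make a single application of Theorem~\ref{desing-th} to $t_{1},\ldots,t_{p}$, so that one family $\{(\sigma_{k},V_{k},U_{k})\}$ serves every piece $Y$, and to check that the disjointness of $\bigsqcup_{k}S_{k}$ passes to the resulting decomposition of $X$.
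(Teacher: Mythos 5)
Your proposal is correct and follows essentially the same route the paper intends: the corollary is stated with only the preceding reduction (quantifier elimination writing $X$ as a disjoint union of the sets $Y$, with $B$ sliced into pieces $A\times\{0\}^{p-q}$) followed by a single application of Theorem~\ref{desing-th} to the $D$-functions $t_{1},\ldots,t_{p}$, and your verification of the openness of $\Omega_{k}$ via local constancy of $\mathrm{rv}$ on $K^{\times}$ and of the T-analyticity of the zero set is exactly the bookkeeping the paper leaves implicit.
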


Combining the above corollary with Proposition~\ref{strat-anal} in the version for T-analytic manifolds, we achieve the following strong version of stratifications of $\mathcal{L}_{\mathcal{A}}$-definable sets.

\begin{theorem}\label{strat-strong}
Every $\mathcal{L}_{\mathcal{A}}$-definable subset $X$ of $(K^{\circ})^{n}$ is a finite disjoint union of strong analytic manifolds $S_{k} := \sigma_{k}(W_{k})$, $k=1,\ldots,s$, such that:

1) Each $\sigma_{k}: N_{k} \to (K^{\circ})^{n}$ is a multi-blowup with exceptional divisor $E_{k}$ and each $N_{k}$ is a T-analytic submanifold of $(K^{\circ})^{n} \times \mathbb{P}^{N}(K)$ for some integer $N$ large enough;

2) For each $k=1,\ldots,s$, $W_{k} := (Z_{k} \cap \Omega_{k}) \setminus E_{k}$ is a strong analytic submanifold, where $U_{k} \subset N_{k}$ is a clopen T-subdomain of $N_{k}$ or of the pre-image of an admissible smooth center $C$ (where a new process starts via induction), $Z_{k}$ is a T-analytic subset of $U_{k}$ and $\Omega_{k}$ is an open $\mathcal{L}_{\mathcal{A}}$-definable subset of $U_{k}$.

3) The restriction $\sigma_{k}|_{W_{k}}: W_{k} \to S_{k}$ is a homeomorphic parametrization of the stratum $S_{k}$.
\end{theorem}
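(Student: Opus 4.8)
The plan is to assemble the statement from the pieces already in hand, so the proof is essentially a bookkeeping argument chaining together Corollary~\ref{strat-half}, Theorem~\ref{desing-th}, and the T-subdomain version of Proposition~\ref{strat-anal}. First I would apply Corollary~\ref{strat-half} to the definable set $X \subset (K^{\circ})^{n}$: after passing to a suitable finite family of $D$-functions $t_{1},\ldots,t_{p}$ (coming from elimination of valued field quantifiers together with Remark~\ref{QE}) and running the term-desingularization of Theorem~\ref{desing-th} on them, $X$ becomes a finite disjoint union of sets of the form $\sigma_{k}(\Omega_{k} \cap Z_{k})$, where $\sigma_{k}\colon N_{k}\to (K^{\circ})^{n}$ is a multi-blowup with exceptional divisor $E_{k}$, $U_{k}\subset N_{k}$ is a clopen T-subdomain (possibly inside the pre-image of an admissible center $C$, where a nested process has been run), $\Omega_{k}$ is the open definable locus cut out by conditions $\mathrm{rv}\,\omega_{i,k}\in A$ for $i\le q$, and $Z_{k}=Z(\omega_{q+1,k})\cap\cdots\cap Z(\omega_{p,k})$ is the T-analytic subset of $U_{k}$ defined by the remaining pull-backs. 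Setting $W_{k}:=(Z_{k}\cap\Omega_{k})\setminus E_{k}$, items~(1) and the first half of~(2) are then immediate.

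Next I would upgrade $W_{k}$ to a \emph{strong analytic submanifold}. The set $Z_{k}$ is T-analytic, so Proposition~\ref{strat-anal} (the version for clopen T-subdomains of T-analytic manifolds, as flagged in the paragraph preceding the theorem) expresses it as a finite disjoint union of subanalytic submanifolds, each a one-to-one projection of a strong (semi)analytic submanifold of some $U_{k}\times\mathbb{P}^{N'}(K)$, the projections being restrictions of multi-blowups with smooth centers. Composing the blowup $\sigma_{k}$ with these further multi-blowups — and enlarging $N$ accordingly, absorbing the extra $\mathbb{P}^{N'}$ factor so that each $N_{k}$ remains a T-analytic submanifold of $(K^{\circ})^{n}\times\mathbb{P}^{N}(K)$ — I can arrange that $Z_{k}$ itself, and hence its intersection with the open definable set $\Omega_{k}$ and with the complement of the (updated) exceptional divisor, is a strong analytic submanifold. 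Intersecting a strong analytic submanifold with an open definable set and removing a closed (normal-crossings) divisor preserves the property of being a strong analytic submanifold, so $W_{k}$ qualifies; this gives the second half of~(2). I would also note, as in Remark~\ref{desing-uniform}, that all of this goes through uniformly in models of $T_{\mathcal{A}}$ thanks to the canonical character of the desingularization algorithm.

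Finally, for item~(3) I would invoke the closedness theorem (\cite[Theorem~1.3]{Now-APAL}): every multi-blowup $\sigma_{k}\colon N_{k}\to (K^{\circ})^{n}$ is a definably closed map, hence its restriction to the complement of the exceptional divisor $E_{k}$ is a homeomorphism onto its (open) image; restricting further to $W_{k}\subset N_{k}\setminus E_{k}$ yields a homeomorphic parametrization $\sigma_{k}|W_{k}\colon W_{k}\to S_{k}$. Disjointness of the $S_{k}$ is inherited from the disjointness already arranged in Corollary~\ref{strat-half} and Theorem~\ref{desing-th}, after the further refinement by Proposition~\ref{strat-anal} (whose output is a \emph{disjoint} union). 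The one genuinely delicate point — and the step I expect to be the main obstacle — is the compatibility bookkeeping: keeping track of the ambient projective dimension $N$ as it grows under the successive blowups, checking that intersecting with the T-subdomains $U_{k}$ and with $\Omega_{k}$ does not destroy the strong analyticity or the clopen/T-subdomain structure, and verifying that the nested process over admissible centers $C$ (where "a new process starts via induction") glues correctly with the process on the ambient manifold. All of this is routine given the earlier results, but it is where care is needed.
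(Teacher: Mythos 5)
Your proposal is correct and follows essentially the same route as the paper, which proves the theorem simply by combining Corollary~\ref{strat-half} with the T-subdomain version of Proposition~\ref{strat-anal} (and, for item~(3), the closedness-theorem observation already recorded among the features of the desingularization process in Section~4). Your more detailed bookkeeping of the enlarged projective factor and of $W_{k}=(Z_{k}\cap\Omega_{k})\setminus E_{k}$ is exactly the content the paper leaves implicit.
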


\begin{remark}\label{emphasize}
It seems that the only results on stratifications of sets definable over valued fields known before are the ones for subanalytic subsets of affine spaces over an algebraically closed valued field, provided in \cite[Theorem~5.2]{LR-subanalytic} ff., \cite[Theorem~4.4]{LR-strat} and \cite[Corollary~3.4.7]{C-Lip}. They state, generally speaking, that every $\mathcal{L}_{\mathcal{A}}$-subanalytic subset of $K_{alg}^{n}$ is a finite disjoint union of $\mathcal{L}_{\mathcal{A}}$-subanalytic manifolds.
Let us emphasize that our theorem on strong stratifications applies to all definable sets, not only to subanalytic ones. Moreover, the description of the strata is much more precise: every stratum is the intersection of a global $\mathcal{L}_{\mathcal{A}}$-subanalytic subset and an open $\mathcal{L}_{\mathcal{A}}$-definable subset.
\end{remark}

\begin{remark}\label{strat-uniform}
In view of Remark~\ref{desing-uniform}, the above desingularization of terms works uniformly in models $L$ of the theory $T_{\mathcal{A}}$. Therefore the desingularization process over $K$ can be treated as the restriction of the one over the algebraic closure $K_{alg}$ with multi-blowups $\sigma_{k}'$ and analytic functions $\omega_{i,k}'$. We can thus consider the sets (notation of Corollary~~\ref{strat-half}):
$$ \left\{ y \in V_{k}': \ (\mathrm{rv}\, \omega_{1,k}'(y), \ldots, \mathrm{rv}\, \omega_{q,k}'(y)) \in A \right\} \cap  $$
$$  \cap \left\{ y \in V_{k}': \  \omega_{q+1,k}'(y)= \ldots , = \omega_{p,k}'(y) = 0 \right\} , $$
which are definable over $K_{alg}$ in the $RV$-expansion of the language $\mathcal{L}_{\mathcal{A}}$ by adding $A \subset RV(K)^{q}$ as a definable subset of $RV(K_{alg})^{q}$. By the resplendency property of Hensel minimality (cf.~\cite[Theorems~4.1.19 and~6.2.1]{C-H-R}), the stratification over $K$ from the above theorem is the restriction of the one over $K_{alg}$ (with the associated data $\sigma_{k}'$, $W_{k}'$). Furthermore, each strong analytic submanifold $W_{k}$ is the trace of an $\mathcal{L}_{\mathcal{A}}$-definable T-analytic submanifold $M'$ of $(K_{alg}^{\circ})^{n} \times \mathbb{P}^{N}(K_{alg})$ on an open $\mathcal{L}_{\mathcal{A}}$-definable subset $\Omega_{k}$ of $N_{k}$. This will be used in Section~6 concerning Yomdin--Gromov parametrizations.
\end{remark}

\section{A term description of definable functions}

In this section, we give a certain term description of definable functions after a very limited, algebraic Skolemization in the sort $RV$. Here it is more convenient to denote the valuation ring and its maximal ideal by $\mathcal{O}_{K} =K^{\circ}$ and $\mathcal{M}_{K} = K^{\circ \circ}$.
The language $\mathcal{L}_{\mathcal{A}}$ to $\mathcal{L}_{\mathcal{A}}^{*}$ will be expanded by adding the following multi-sorted Henselian functions (witnesses) and roots:
$$ h_{m}: K^{m+1} \times Kv \to K \ \ \text{and} \ \ \mathrm{root}_{m}: K \times RV(K) \to K. $$

Firstly, consider the polynomial
$$ P(A,Y) := \sum_{i=0}^{m} \; A_{i}Y^{i} \in K[A,Y]; $$
if $a_{0}, \ldots, a_{m} \in \mathcal{O}_{K}$, $e \in (Kv)^{\times}$, and
$$ P(a,e) = 0 \bmod{\mathcal{M}_{K}}, \ \ \frac{\partial P}{\partial Y}(a,e) \neq 0 \bmod{\mathcal{M}_{K}}, $$
then $h_{m}(a_{0}, \ldots, a_{m},e)$ is the unique $b \in \mathcal{O}_{K}$ such that
$$ P(a,b)=0 \ \  \text{and} \ \  r(b) =rv(b) = e, $$
if such an element $b$ exists; otherwise we put $h_{m}(a_{0}, \ldots, a_{m},e)=0$.

Secondly, $\mathrm{root}_{m}(a,\xi)$ is the unique $b$ such that $b^{m} = a$ and $rv(b) = \xi$, if such an element $b$ exists; otherwise we put $\mathrm{root}_{m}(a,\xi)=0$.

\vspace{1ex}

After adding the above witness functions, definable functions have the following term structure (cf.~\cite[Section~7]{C-Lip-R}, and also~\cite[Setion~3]{C-Fo-L}):

\begin{theorem}\label{term-str}
The theory $T_{\mathcal{A}}$ has the following property uniformly in its models $K$. Let $f: X \to K$ be an $\mathcal{L}_{\mathcal{A}}$-definable function on a subset $X$ of $K^{n}$. Then there exist an $\mathcal{L}_{\mathcal{A}}$-definable function $g: X \to RV(K)^{N}$ for some positive integer $N$ and an $\mathcal{L}_{\mathcal{A}}^{*}$-term $t$ such that
$$ f(x) = t(x,g(x)) \ \ \text{for all} \ x \in X. $$
\end{theorem}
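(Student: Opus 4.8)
\emph{Overall approach.} The plan is to adapt, to the present separated analytic setting, the proof of the analogous term-structure statement in \cite[Section~7]{C-Lip-R} and \cite[Section~3]{C-Fo-L}, the organising idea being to split the content of $f$ into a purely \emph{algebraic} part --- solving Henselian polynomial equations and extracting $m$-th roots, exactly what $h_{m}$ and $\mathrm{root}_{m}$ witness --- and a ``combinatorial'' part that is pushed into the $RV$-valued definable function $g$. First I would reduce to describing the graph $\Gamma_{f}\subset(K^{\circ})^{n}\times K^{\circ}$ with $|f|\le 1$: partition $K^{n}$ into the finitely many boxes on which each $x_{i}$ lies in $K^{\circ}$ or in $\{|x_{i}|>1\}$ and substitute $x_{i}\mapsto x_{i}^{-1}$ on the latter factors; split $X$ according to $|f|\le 1$, $|f|>1$, $f=0$, replacing $f$ by $(f^{-1})^{-1}$ and by $0$ respectively. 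All these operations are $\mathcal{L}_{\mathcal{A}}$-terms, so nothing is lost.

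\emph{Reduction to polynomials with term coefficients.} By elimination of valued field quantifiers and Remark~\ref{QE}, $\Gamma_{f}$ is quantifier-free $\mathcal{L}_{\mathcal{A}}^{D}$-definable, hence a Boolean combination of conditions $\mathrm{rv}\,\tau_{j}(x,y)\in\Lambda_{j}$ with $\tau_{j}$ $D$-functions on $(K^{\circ})^{n+1}$ and $\Lambda_{j}\subset RV(K)$ definable in $\mathcal{L}_{rv}$. Applying Theorem~\ref{desing-th} to $\tau_{1},\ldots,\tau_{p}$ I would pass to a finite partition $(K^{\circ})^{n+1}=\bigsqcup_{k}\sigma_{k}(V_{k})$ with $\tau_{j}^{\sigma_{k}}=\omega_{j,k}|_{V_{k}}$ for T-analytic $\omega_{j,k}$ on clopen T-subdomains $U_{k}$; on $V_{k}=U_{k}\setminus E_{k}$ the dividers occurring in the $\omega_{j,k}$ are non-vanishing units, so each $\omega_{j,k}$ is there a genuine global analytic function. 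After a further partition into residue-disc translates one works on (a translate of) a polydisc $(K^{\circ\circ})^{n+1}$, and I would invoke the Weierstrass--R\"{u}ckert theory for the one-kind-of-variables rings, available by excellence (cf.\ Section~3), to prepare each $\omega_{j,k}$ in the distinguished variable: $\omega_{j,k}=(\text{unit})\cdot P_{j,k}$ with $P_{j,k}$ monic in $y$ and with coefficients that are global analytic functions of $x$, hence $\mathcal{L}_{\mathcal{A}}^{D}$-terms. The conditions $\mathrm{rv}\,\tau_{j}\in\Lambda_{j}$ thereby become $\mathrm{rv}$-conditions on the $P_{j,k}$ along the graph, and, tracking coordinates on $N_{k}$ back through the blowup charts, $y=f(x)$ is exhibited as a common root of finitely many monic polynomials with term coefficients in $\mathcal{O}_{K}$.

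\emph{Selecting the root and assembling $t$.} Among those finitely many common roots, $f(x)$ is singled out by the $RV$-data $\mathrm{rv}\,\omega_{j,k}(x,f(x))$ and the leading terms $\mathrm{rv}(f(x)-\beta(x))$ relative to the relevant centers/roots $\beta$. I would peel off the unwanted roots by passing to resultants and discriminants --- a definable refinement kept inside the term algebra --- so that $f(x)$ becomes a \emph{simple} root of a single monic $P(x,y)$ with term coefficients. If $\deg P=1$, $f(x)$ is a restricted division of terms; if $P=y^{m}-a(x)$, then $f(x)=\mathrm{root}_{m}(a(x),\mathrm{rv}\,f(x))$; in general, after a rescaling $y\mapsto\lambda(x)\,y$ with $\mathrm{rv}\,\lambda(x)$ chosen so that $f(x)/\lambda(x)\in\mathcal{O}_{K}$ and $P'$ does not vanish mod $\mathcal{M}_{K}$ at it --- possible since the root is simple, after a further splitting and, if necessary, an induction on $\deg P$ via $\gcd(P,P')$ --- Hensel's lemma gives $f(x)/\lambda(x)=h_{m}(a_{0}(x),\ldots,a_{m}(x),r(f(x)/\lambda(x)))$. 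Collecting into $g$ the finitely many $RV$-coordinates used, together with an $RV$-index recording the piece --- every such coordinate being $\mathcal{L}_{\mathcal{A}}$-definable in $x$, as $f$ itself is --- and combining the pieces by the characteristic-function mechanism of Section~4 yields a single $\mathcal{L}_{\mathcal{A}}^{*}$-term $t$ with $f(x)=t(x,g(x))$ on $X$. Uniformity over the models $L\models T_{\mathcal{A}}$ follows as in Remarks~\ref{desing-uniform} and~\ref{strat-uniform}, since quantifier elimination, desingularization of terms, Weierstrass preparation for $\mathcal{A}(K)$ and Hensel's lemma are all uniform.

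\emph{Main obstacle.} The delicate point is the last one: showing that $f(x)$ is, piecewise, a \emph{Hensel-ready simple} root of a polynomial with honest \emph{term} coefficients. This demands keeping the resultant and discriminant manipulations inside the term algebra, controlling ramification by the $\mathrm{rv}$-prescribed rescaling $\lambda$ together with an induction on $\deg P$ to pass from a multiple to a simple root, and verifying that the Weierstrass data produced on the T-analytic manifolds $N_{k}$ genuinely descends, coordinate by coordinate through the blowup charts $\sigma_{k}$, to $\mathcal{L}_{\mathcal{A}}^{D}$-terms in the original variables $x$.
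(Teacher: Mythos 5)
First, a point of comparison: the paper does not prove Theorem~\ref{term-str} at all. The statement is introduced by ``cf.~\cite[Section~7]{C-Lip-R}, and also \cite[Section~3]{C-Fo-L}'' and closed with a $\Box$; it is imported from Cluckers--Lipshitz--Robinson, where it is established \emph{together with} analytic cell decomposition by induction on the number of valued-field variables, with Weierstrass preparation and division carried out in the last variable $y$ after suitable partitioning of the base. Your proposal is therefore necessarily a different route, and must be judged on its own merits.

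On those merits there is a concrete gap at the central step. Theorem~\ref{desing-th} prepares the $D$-functions $\tau_{j}(x,y)$ on $(K^{\circ})^{n+1}$ only after pulling back along multi-blowups $\sigma_{k}:N_{k}\to(K^{\circ})^{n+1}$, and the resulting T-analytic functions $\omega_{j,k}$ live in the local coordinates of $N_{k}\subset(K^{\circ})^{n+1}\times\mathbb{P}^{N}(K)$, not in $(x,y)$. Weierstrass preparation on a chart of $N_{k}$ yields a distinguished polynomial monic in a \emph{chart coordinate}, and only after a coordinate change making the function regular in that coordinate --- a change that in general mixes $x$ and $y$. Nothing guarantees that this descends to a polynomial monic in $y$ with coefficients that are terms in $x$ alone; for an $\omega$ not regular in $y$ (e.g.\ independent of $y$ on a piece) no such preparation in $y$ exists without first partitioning the $x$-space, which is precisely the inductive cell-decomposition content of the cited proof that you are trying to bypass. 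Moreover the partition $(K^{\circ})^{n+1}=\bigsqcup_{k}\sigma_{k}(V_{k})$ is not compatible with the projection to the $x$-coordinates (the fibres of $\sigma_{k}$ over the centres are positive-dimensional), so the graph of $f$ meets the pieces in a way that is not fibred over a definable partition of $X$; the phrase ``tracking coordinates back through the blowup charts'' conceals exactly the difficulty you yourself flag as the main obstacle. The subsequent resultant/discriminant manipulations and the Hensel/root step are standard \emph{once} one has a monic polynomial in $y$ with term coefficients of which $f(x)$ is a simple root with prescribed $\mathrm{rv}$-data --- but producing that datum is the substance of the theorem, and the detour through resolution of singularities does not deliver it.
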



\vspace{1ex}

A similar term structure (cf.~\cite[Theorem~6.3.8]{C-Lip-0}) can be formulated by means of the following, so-called basic Henselian functions
$$ h_{m} : K^{m+1} \times RV(K) \to K, \ \ m \in \mathbb{N}, $$
whose construction (cf.~\cite[Definition~6.1.7]{C-Lip-0}) is based on the following lemma going back to Cohen~\cite{Co}, Denef~\cite{De} and Pas~\cite{Pa1}. Its proof uses a strong version of Hensel's lemma (see e.g.~\cite[Theorem~4.1.3]{E-P}) and a homothetic transformation.

\begin{lemma}
Consider the polynomial
$$ P(A,Y) = \sum_{i=0}^{m} A_{i}Y^{i} \in K[A,Y], \ \ A = (A_0,A_1,\ldots,A_m), $$
and suppose that, for $a_{i} \in K$, $i=0,\ldots,m$, and $e \in K$, $e \neq 0$, we have
$$ \min \, \{ v(a_{i}e^{i}), \ i=0,\ldots,m \} = v(a_{i_{0}}e^{i_{0}}) < v(f(a,e)) \ \ \text{for an} \ \ i_{0} \geq 1 $$
and
$$ v\left( \frac{\partial P}{\partial Y}(a,e) \right) = v(a_{i_{0}}e^{i_{0}-1}). $$
Then there is a unique $b \in K$ such that $rv(b) = rv(e)$ and $P(a,b)=0$.
  \hspace*{\fill} $\Box$
\end{lemma}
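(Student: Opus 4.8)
The plan is to reduce to the Weierstrass/Hensel preparation picture by a homothetic normalization, then apply a strong form of Hensel's lemma. First I would pass from $(a,b)$ to the normalized data by the substitution $Y = bZ$; setting $g(Z) := f(a, bZ) = \sum_{i} a_i b^i Z^i$, the hypotheses become: $\min_i v(a_i b^i) = v(a_{i_0} b^{i_0})$ is attained at some $i_0 \ge 1$, this minimum is strictly less than $v(g(1)) = v(f(a,b))$, and $v(g'(1)) = v(a_{i_0} b^{i_0 - 1}) = v(a_{i_0} b^{i_0}) - v(b)$; note $g'(Z) = \sum_i i a_i b^i Z^{i-1}$ so $b \cdot g'(1) = \tfrac{\partial f}{\partial Y}(a,b) \cdot b$, consistent with the stated derivative condition. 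After dividing $g$ by the coefficient of minimal valuation $a_{i_0} b^{i_0}$, I obtain a polynomial $\widetilde{g}(Z) = \sum_i c_i Z^i$ with all $c_i \in \mathcal{O}_K$, with $c_{i_0} \in \mathcal{O}_K^{\times}$, with $v(\widetilde{g}(1)) > 0$, i.e.\ $\widetilde{g}(1) \equiv 0 \bmod \mathcal{M}_K$, and with $v(\widetilde{g}'(1)) = 0$, i.e.\ $\widetilde{g}'(1) \in \mathcal{O}_K^{\times}$.

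With this normalization in hand, the second step is to invoke the strong (i.e.\ value-quantitative) Hensel's lemma, e.g.\ \cite[Theorem~4.1.3]{E-P}, applied to $\widetilde{g}$ at the approximate root $Z = 1$: since $v(\widetilde{g}(1)) > 0 = 2 v(\widetilde{g}'(1))$, there exists a unique $\zeta \in \mathcal{O}_K$ with $\widetilde{g}(\zeta) = 0$ and $v(\zeta - 1) > v(\widetilde{g}'(1)) = 0$. The condition $v(\zeta - 1) > 0$ says exactly $\zeta \equiv 1 \bmod \mathcal{M}_K$, hence $rv(\zeta) = rv(1) = 1$. Setting $\beta := b\zeta$ undoes the homothety: $f(a, \beta) = f(a, b\zeta) = g(\zeta) = a_{i_0} b^{i_0} \cdot \widetilde{g}(\zeta) = 0$, and $rv(\beta) = rv(b) \cdot rv(\zeta) = rv(b)$, which is the asserted existence.

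For uniqueness, suppose $\beta' \in K$ also satisfies $rv(\beta') = rv(b)$ and $f(a,\beta') = 0$. Writing $\beta' = b\zeta'$, the condition $rv(\beta') = rv(b)$ forces $rv(\zeta') = 1$, i.e.\ $\zeta' \in 1 + \mathcal{M}_K \subset \mathcal{O}_K$ with $v(\zeta' - 1) > 0$, and $\widetilde{g}(\zeta') = 0$. Thus $\zeta'$ is another root of $\widetilde{g}$ in the residue disc $1 + \mathcal{M}_K$; the uniqueness clause of the strong Hensel's lemma (which holds because $v(\widetilde{g}'(1)) = 0$, so the root is simple and the Newton-approximation contraction is strict) gives $\zeta' = \zeta$, hence $\beta' = \beta$.

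The main obstacle I anticipate is purely bookkeeping: verifying that after the homothety $Y = bZ$ and division by $a_{i_0}b^{i_0}$ the three hypotheses translate precisely into the input required by the chosen version of Hensel's lemma — in particular that the strict inequality $\min_i v(a_i b^i) < v(f(a,b))$ becomes $v(\widetilde{g}(1)) > 0$ while the derivative equality becomes $v(\widetilde{g}'(1)) = 0$, so that the gap $v(\widetilde{g}(1)) - 2v(\widetilde{g}'(1))$ is strictly positive as Hensel demands. Once that translation is confirmed, both existence and uniqueness are immediate from the cited theorem, and no further work on the valued-field side is needed.
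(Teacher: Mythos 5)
Your proposal is correct and follows exactly the route the paper indicates (the paper records no written proof, only the remark that one uses a homothetic transformation and a strong version of Hensel's lemma from Engler--Prestel): substitute $Y=bZ$, normalize by the coefficient $a_{i_0}b^{i_0}$ of minimal valuation, and apply Hensel at $Z=1$. One bookkeeping slip to fix: since $g'(1)=b\cdot\frac{\partial f}{\partial Y}(a,b)$, the correct value is $v(g'(1))=v(a_{i_0}b^{i_0})$ rather than $v(a_{i_0}b^{i_0-1})$, and it is this equality that yields $v(\widetilde{g}'(1))=0$ after division by $a_{i_0}b^{i_0}$ --- so your stated normalized conditions $v(\widetilde{g}(1))>0=2v(\widetilde{g}'(1))$ are the right ones and the rest of the argument goes through.
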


Hensel minimality enjoys the resplendency property, i.e.\ it is preserved with respect to expansions of the imaginary sort $RV$ (cf.~\cite[Theorem~4.1.19]{C-H-R}).
(One can always confine oneself to considering an expansion of $RV$ by predicates.) Hence, in particular, cell decomposition is further available after expanding the sort $RV$.
We still need a resplendent version of term structure since we shall augment the sort $RV$ by adding some algebraic Skolem functions.

\begin{remark}\label{explanation}
The reason why we present both versions of term structure is as follows. While the first version is more convenient for the proof of Proposition~\ref{descript} on piecewise description of definable functions by terms, the second one is used to define Henselian terms. The Henselian terms, built from basic Henselian functions $h_{m}$, correspond to the terms satifying condition (*) introduced by Cluckers--Forey--Loeser~\cite{C-Fo-L}, and built from Henselian witnesses and roots.
\end{remark}

\begin{proposition}\label{term-resplendent}
The above theorem on term structure remains valid after any $RV$-expansion $\mathcal{L}_{\mathcal{A}}'$ of $\mathcal{L}_{\mathcal{A}}$, i.e.\ after any expansion $\mathcal{L}_{rv}'$ of $\mathcal{L}_{rv}$. More precisely, for every $\mathcal{L}_{\mathcal{A}}'$-definable function $f: X \subset K^{n} \to K$, there exist an $\mathcal{L}_{\mathcal{A}}'$-definable function $g: X \to RV(K)^{k}$, for some $k \in \mathbb{N}$, and an $\mathcal{L}_{\mathcal{A}}^{*}$-term $t$ such that
$$ f(x) = t(x, g(x)) \ \ \text{for all} \ \ x \in X. $$
\end{proposition}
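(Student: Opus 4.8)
The plan is to deduce Proposition~\ref{term-resplendent} from Theorem~\ref{term-str} by invoking the resplendency of Hensel minimality, rather than by re-running the proof of the term structure theorem. The key observation is that an $RV$-expansion $\mathcal{L}_{\mathcal{A}}'$ adds no new structure on the valued field sort except through the sort $RV$, so all the tools used to prove Theorem~\ref{term-str} — cell decomposition, preparation, elimination of valued field quantifiers — remain available in the expansion by \cite[Theorem~4.1.19]{C-H-R}. First I would recall that, as noted in the excerpt, one may assume the $RV$-expansion is by a collection of predicates $R_i \subset RV(K)^{t_i}$; this costs nothing since any definable function or relation on $RV$ can be coded by its graph as a predicate. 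The theory $T_{\mathcal{A}}$ together with these predicates is again $1$-h-minimal (indeed resplendently so), and the additional sorts/structure live entirely on $RV$.

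The main step is then a reduction of an $\mathcal{L}_{\mathcal{A}}'$-definable function $f:X\subset K^n\to K$ to an $\mathcal{L}_{\mathcal{A}}$-definable one at the cost of enlarging the $RV$-parameter. Since $f$ is $\mathcal{L}_{\mathcal{A}}'$-definable and the extra predicates only constrain tuples in $RV$, the graph of $f$ is defined by an $\mathcal{L}_{\mathcal{A}}$-formula in which the predicates $R_i$ appear only applied to $RV$-terms. Apply Theorem~\ref{SLCD} (Lipschitz cell decomposition preparing $RV$-parametrized sets), which holds in any $1$-h-minimal structure and in particular in $(K,\mathcal{L}_{\mathcal{A}}')$: decompose $X$ into $\mathcal{L}_{\mathcal{A}}'$-definable cells so that on each twisted box the relevant $RV$-data — including the truth values of the $R_i$ along the relevant $RV$-coordinates — is constant. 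On each such piece, the constraint imposed by the extra predicates becomes vacuous or absorbed into the choice of box, so $f$ restricted there agrees with an $\mathcal{L}_{\mathcal{A}}$-definable function of $x$ together with some $\mathcal{L}_{\mathcal{A}}'$-definable $RV$-valued auxiliary datum $h(x)\in RV(K)^{\ell}$ recording which box $x$ lies in. To this piecewise $\mathcal{L}_{\mathcal{A}}$-definable function one applies Theorem~\ref{term-str} verbatim, producing an $\mathcal{L}_{\mathcal{A}}^{*}$-term $t_0$ and an $\mathcal{L}_{\mathcal{A}}$-definable $g_0:X\to RV(K)^{N}$ with $f(x)=t_0(x,g_0(x))$ on that piece; then one sets $g(x):=(h(x),g_0(x))$ and glues the finitely many pieces using a single $RV$-valued index, absorbing the case distinction into the term $t$ (the witness functions $h_m$ and $\mathrm{root}_m$ take $RV$-arguments, so a case split governed by $RV$-data can be written as one $\mathcal{L}_{\mathcal{A}}^{*}$-term).

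The uniformity across models of $T_{\mathcal{A}}$ in Theorem~\ref{term-str} transfers to the resplendent version because the cell decomposition of Theorem~\ref{SLCD}, the preparation data, and the term output are all obtained uniformly; resplendency (\cite[Theorem~4.1.19]{C-H-R}) is itself a statement uniform in models. I would also remark that the same argument applies word for word with the basic Henselian functions $h_m:K^{m+1}\times RV(K)\to K$ in place of the multi-sorted witnesses, giving the resplendent analogue of the variant term structure \cite[Theorem~6.3.8]{C-Lip-0}.

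The step I expect to be the main obstacle is the gluing: merging the finitely many piecewise term representations into a single $\mathcal{L}_{\mathcal{A}}^{*}$-term $t$ and a single $\mathcal{L}_{\mathcal{A}}'$-definable $g$. One must check that the characteristic functions of the pieces — which are $\mathcal{L}_{\mathcal{A}}'$-definable, hence depend on the extra $RV$-predicates — can be expressed using only the allowed ingredients, namely $\mathcal{L}_{\mathcal{A}}^{*}$-terms evaluated on $(x,g(x))$ with $g$ carrying the $RV$-side information. This is where it matters that the extra structure sits on $RV$ and that $g$ is allowed to be $\mathcal{L}_{\mathcal{A}}'$-definable into $RV(K)^{k}$: the piece containing $x$ is determined by $\mathcal{L}_{\mathcal{A}}'$-definable $RV$-data, so it can be folded into $g$, and the term $t$ only needs to perform an $RV$-indexed selection among finitely many $\mathcal{L}_{\mathcal{A}}^{*}$-terms, which is itself expressible as an $\mathcal{L}_{\mathcal{A}}^{*}$-term (e.g.\ using $D_0$-type characteristic functions lifted through $rv$, or directly the Henselian/root witnesses on $RV$-arguments). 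Making this bookkeeping precise, while keeping the output genuinely an $\mathcal{L}_{\mathcal{A}}^{*}$-term and the parameter genuinely $RV$-valued, is the crux.
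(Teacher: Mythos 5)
Your overall architecture is the right one and matches the paper's in outline (reduce $f$ to $RV$-parametrized, $\mathcal{L}_{\mathcal{A}}$-definable data, apply Theorem~\ref{term-str} with the $RV$-parameter as an extra constant, and fold the selection of the piece into $g$), but the central reduction step is asserted rather than proved, and as stated it does not work. The extra predicates enter the definition of $f$ through subformulae $\psi(rv(p(x,y)))$ where $y=f(x)$ is the \emph{value} of the function; preparing only the domain $X\subset K^{n}$ by Theorem~\ref{SLCD} gives you no control over the $y$-dependence of $rv(p(x,y))$, so the claim that on each twisted box of $X$ ``the constraint imposed by the extra predicates becomes vacuous'' and $f$ there agrees with an $\mathcal{L}_{\mathcal{A}}$-definable function is unjustified. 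A $K$-valued function cannot be captured by $RV$-data on its domain alone; one has to work with the graph $F\subset K^{n}\times K$. The paper's mechanism for this is the real content of the proof: after removing the $\mathcal{L}_{\mathcal{A}}$-definable locus $Z$ where the terms $p(x,y)$ are discontinuous (which one may assume has finite fibres over $K^{n}$), the condition $\psi(rv(p(x,y)))$ cuts out an \emph{open} $\mathcal{L}_{\mathcal{A}}$-definable set $U$, so the graph of $f$ is forced into $Y\cup Z$, where $Y$ is the $\mathcal{L}_{\mathcal{A}}$-definable set of points isolated in their fibre of $\{\phi\}$; this set is finite-to-one over $K^{n}$, hence by parametrized cell decomposition (in the \emph{unexpanded} language) it is a union of graphs of centers $c_{\xi}$ that are $\mathcal{L}_{\mathcal{A}}\cup\{\xi\}$-definable. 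Only then can Theorem~\ref{term-str} be applied, to each $c_{\xi}$.

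A second, smaller gap: applying Theorem~\ref{term-str} to each $c_{\xi}$ yields terms $t_{\xi}$ and data $d_{\xi}$ depending on $\xi\in\sigma(C)$, which is in general an infinite set; you need a model-theoretic compactness argument to extract finitely many terms $t_{j}$ and a finite definable partition of $\sigma(C)$ on which they work uniformly. Your write-up treats the application of Theorem~\ref{term-str} as a single ``verbatim'' step and locates the crux in the final gluing of finitely many pieces; in fact that last merge (an $RV$-indexed selection among finitely many terms, via characteristic functions as in Section~4) is routine, while the isolation of the $\mathcal{L}_{\mathcal{A}}$-definable centers and the uniformization over $\xi$ are where the work lies.
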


\begin{proof}
Suppose that the graph
$$ F := \mathrm{graph}\, (f) \subset K^{n}_{x} \times K_{y} $$
is defined by an $\mathcal{L}_{\mathcal{A}}'$-formula $\chi(x,y)$, which is equivalent to a finite disjunction of formulae of the form:
\begin{equation}\label{disjunction1}
  \phi(x,y) \, \wedge \, \psi(rv(p(x,y))), \ \ x = (x_1,\ldots,x_n), \ y = y_1,
\end{equation}
where $\phi$ is an $\mathcal{L}_{\mathcal{A}}$-formula in the language of the valued field sort, $\psi$ is an $\mathcal{L}_{rv}^{\prime}$-formula, and $p(x)= (p_{1}(x),\ldots,p_{r}(x))$ is a tuple of terms in the valued field sort.

It is enough to work piecewise. We can thus assume that the formula $\chi$ is of the form
$$ \phi(x,y) \, \wedge \, p_{1}(x,y) \neq 0 \, \wedge  \, \ldots \, \wedge \, p_{r}(x,y) \neq 0 \, \wedge \, \psi(rv(p(x,y))). $$
Further, we can throw away pieces of lower dimension and treat them by induction (on the dimension of the domain of $f$).

The closure $Z \in K^{n} \times K$ of the set of those points where the terms $p(x,y)$ are not continuous is $\mathcal{L}_{\mathcal{A}}$-definable of dimension $\leq n$. Then the set
$$ U := \left\{ (a,b) \in (K^{n} \times K) \setminus Z: \, p_{1}(a,b) \neq 0, \, \ldots, \, p_{r}(a,b) \neq 0, \right. $$
$$ \left. RV(K) \models \, \psi(rv(p(a,b))) \right\} $$
is an open $\mathcal{L}_{\mathcal{A}}$-definable subset of $K^{n} \times K$. Further, the set of those points $a \in K^n$, over which the fiber of $Z$ is infinite, is of dimension $< n$. As we can work piecewise, we may thus assume that the fibres of $Z$ over the points $a \in K^n$ are finite. Therefore, since
$$ F \subset ( \{ (a,b) \in K^{n} \times K: \ K \models \, \phi(a,b) \} \cap U) \cup Z, $$
we get $F \subset Y \cup Z$, where $Y$ is the $\mathcal{L}_{\mathcal{A}}$-definable set of those points
$(a,b) \in K^{n} \times K$ which are isolated in the fibre of the set
$$ \{ (a,b) \in K^{n} \times K: \ K \models \, \phi(a,b) \} $$
over $a$. Then the projection onto the first $n$ coordinates
$$ \pi_{\leq n}: Y \cup Z \to K^n $$
is finite-to-one. By parameterized cell decomposition (Theorem~\ref{SLCD}), $Y \cup Z$ is a finite disjoint union of $\mathcal{L}_{\mathcal{A}}$-definable parameterized cells $C_{i}$ which are the unions of the graphs of parameterized centers. Since $F \subset \bigcup \, C_{i}$, we can assume that $F$ is contained in one of those parameterized cells, say $C$, with an $\mathcal{L}_{\mathcal{A}}$-parametrization $\sigma: C \to RV(K)^k$ for some $k \in \mathbb{N}$. Then
$$ F \subset C = \bigcup_{\xi} \, C_{\xi}, \ \  C_{\xi} = \sigma^{-1}(\xi), $$
and each cell $C_{\xi}$ is the graph of the center $c_{\xi} : \pi_{\leq n}(C_{\xi}) \to K$.

\vspace{1ex}

Now, applying Theorem~\ref{term-str} to $\xi$-definable functions $c_{\xi}$, we obtain
$$ c_{\xi}(x) = t_{\xi}(x, d_{\xi}(x)), \ \ d_{\xi}: C_{\xi} \to RV(K)^{k(\xi)}, \ \ \xi \in \sigma(C), $$
for an $(\mathcal{L}_{\mathcal{A}}^{*} \cup \{ \xi \})$-term $t_{\xi}$ and an $(\mathcal{L}_{\mathcal{A}} \cup \{ \xi \})$-definable map $d_{\xi}$.
Further, by model-theoretic compactness, we can find a finite number of $\mathcal{L}_{\mathcal{A}}^{*}$-terms $t_{j}(x, \xi, \tau)$ and of $\mathcal{L}_{\mathcal{A}}$-definable formulae $\delta_{j}(x, \xi, \tau)$, $j=1,\ldots,s$, and a finite $\mathcal{L}_{\mathcal{A}}$-definable partition $\sigma(C) = \Sigma_{1} \cup \ldots \cup \Sigma_{s}$, which satisfy the following property:

\vspace{1ex}

\begin{em}
For every $\xi \in \Sigma_{j}$, $j=1,\ldots,s$, the formula $\delta_{j}(x, \xi, \tau)$ determines a function
$d_{j, \xi}: C_{\xi} \to RV(K)^{k_{j}}$, $k_{j} \in \mathbb{N}$, such that
$$ c_\xi(x) = t_{j}(x,\xi, d_{j,\xi}(x)) \ \ \text{for all} \ \ x \in C_{\xi}. $$
\end{em}

For each $j=1,\ldots,s$, put
$$ C_{j} := \bigcup_{\xi \in \Sigma_{j}} \, C_{\xi}, \ \ c_{j} := \bigcup_{\xi \in \Sigma_{j}} \, c_{\xi}, \ \
   d_{j} := \bigcup_{\xi \in \Sigma_{j}} \, d_{j,\xi}: C_{j} \to RV(K)^{k_{j}}. $$
Next define an $\mathcal{L}_{\mathcal{A}}$-definable function
$$ e_{j}: C_{j} \to \Sigma_{j} \subset \sigma(C) $$
by the condition $e_{j}(x)=\xi$ if $x \in C_{\xi}$.
Then
$$ c_{j}(x) = t_{j}(x,e_{j}(x),d_{j}(x)) \ \ \text{for all} \ \ x \in C_{j}. $$
Since
$$ C = \bigcup_{j=1}^{s} \, \mathrm{graph}\, (c_{j}) \ \ \text{and} \ \ F = \bigcup_{j=1}^{s} \, (F \cap \mathrm{graph}\, (c_{j})), $$
each set $F \cap \mathrm{graph}\, (c_{j})$ is the graph of the restriction of $f$ to the $\mathcal{L}_{\mathcal{A}}'$-definable set
$$ E_{j} := \pi_{\leq n}(F \cap \mathrm{graph}\, (c_{j})). $$
Hence
$$ f(x) = t_{j}(x,e_{j}(x),d_{j}(x)) \ \ \text{for all} \ \ x \in E_{j}, $$
which finishes the proof.
\end{proof}

We are now going to prove that, after adding some algebraic Skolem functions in the $RV$ sort, a piecewise description of definable functions by terms will be available.
We start with the concept of algebraic Skolem functions and algebraic Skolemization for algebraically bounded theories. For convenience, we recall some necessary terminology.

\vspace{1ex}

An $\mathcal{L}$-theory $T$ is called {\em algebraically bounded} if for every $\mathcal{L}$-formula $\phi(\bar{x},y)$ there is an integer $s_{\phi}$ such that for every model $M$ of $T$ and any tuples $\bar{a}$ from $M^{k}$ the set
$$ \{ b \in M: \ M \models \; \phi(\bar{a},b) \} $$
is either infinite or of size $\leq s_{\phi}$; or equivalently, if $Y$ eliminates the quantifier $\exists^{\infty}$, i.e.\ for every $\mathcal{L}$-formula $\phi(\bar{x},y)$ and every model $M$ of $T$ the set
$$ \{ \bar{a} \in M^{k}: \ \# \{ b \in M: \  M \models \; \phi(\bar{a},b) \} < \infty \} $$
is $\mathcal{L}$-definable. The {\em algebraic Skolem} function for an $\mathcal{L}$-formula $\phi(\bar{x},y)$ shall be the Skolem function for the formula
$$ \left[ \phi(\bar{x},y) \wedge \exists^{\leq s_{\phi}} \, z \; \phi(\bar{x},z) \right] \ \vee \ \left[ \left( \neg \exists \, z \; \phi(\bar{x},z) \;
   \vee \; \exists^{>s_{\phi}}  \, z \; \phi(\bar{x},z) \right) \wedge y = x_{1} \right],   $$
whenever the set of $\bar{x}$ variables is non empty; otherwise we add the Skolem constant for the formula
$$ \phi(y) \, \wedge \, \exists^{\leq s_{\phi}} \, z \; \phi(z). $$
Algebraic Skolemization is the operation of adding the algebraic Skolem functions to all $\mathcal{L}$-formulae.

\vspace{1ex}

We say that $T$ has algebraic Skolem functions if, in each model of $T$, the algebraic Skolem function for each $\mathcal{L}$-formula $\phi(\bar{x},y)$ can be represented by an $\mathcal{L}$-definable function. It is not difficult to check that then the model-theoretic algebraic closure coincides with the definable closure in all models of $T$. Clearly, the algebraic Skolemization of any algebraically bounded theory $T$ has algebraic Skolem functions. This also refers to algebraic Skolem functions and algebraic Skolemization in a sort of a multi-sorted structure.

\vspace{1ex}

Although the sorts $VF$ (valued field) and $VG$ (value group) are algebraically bounded, $RV$ and $RF$ (residue field) are in general not. Yet only a very limited, algebraic Skolemization in the sort $RV$ is needed for the piecewise description by terms we are aiming at. Namely it suffices to add merely the algebraic Skolem functions for the roots in the $RV$-sort, and for the polynomial equations in the residue field sort $\mathcal{R} \subset RV$. 
Denote by $\mathcal{L}_{rv}'$ and $\mathcal{L}_{\mathcal{A}}^{\dag}$ the thus extended languages $\mathcal{L}_{rv}$ and $\mathcal{L}_{\mathcal{A}}^{*}$, respectively.

\begin{proposition}\label{descript}
The theory $T_{\mathcal{A}}$ 
has the following property uniformly in its models $K$. Let $f: X \to K$ be an $\mathcal{L}_{\mathcal{A}}^{\dag}$-definable function on a subset $X$ of $K^{n}$. Then $f$ is given piecewise by $\mathcal{L}_{\mathcal{A}}^{\dag}$-terms, i.e.\ there exist a finite $\mathcal{L}_{\mathcal{A}}^{\dag}$-definable partition $X =X_{1} \cup \ldots \cup X_{s}$ and $\mathcal{L}_{\mathcal{A}}^{\dag}$-terms $t_{1},\ldots,t_{s}$ such that
$$ f(x) = t_{i}(x) \ \ \text{for all} \ \ x \in X_{i}, \ i=1,\ldots,s. $$
\end{proposition}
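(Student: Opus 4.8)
The plan is to start from the resplendent term description of Proposition~\ref{term-resplendent}: applied to the $\mathcal{L}_{\mathcal{A}}^{\dag}$-definable function $f$, viewed as an $\mathcal{L}_{\mathcal{A}}'$-definable function for the $RV$-expansion $\mathcal{L}_{rv}'$ consisting of $\mathcal{L}_{rv}$ together with the graphs of the algebraic Skolem functions for the roots $\mathrm{root}_{m}$ and for the polynomial equations occurring in the definition of the Henselian witnesses $h_{m}$, it yields an $\mathcal{L}_{\mathcal{A}}'$-definable map $g: X \to RV(K)^{k}$ and an $\mathcal{L}_{\mathcal{A}}^{*}$-term $t$ with $f(x)=t(x,g(x))$ for all $x\in X$. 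So the whole problem reduces to eliminating the auxiliary map $g$: we must show that the finitely many $RV$-coordinates $g_{1}(x),\ldots,g_{k}(x)$ can themselves be expressed, piecewise on an $\mathcal{L}_{\mathcal{A}}^{\dag}$-definable partition of $X$, by $\mathcal{L}_{\mathcal{A}}^{*}$-terms in $x$ alone, after which substitution into $t$ finishes the proof.

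To do this I would first record that $g$ factors as $rv$ applied to a valued-field $D$-function composed with the $\mathcal{L}_{\mathcal{A}}$-structure: indeed, by Corollary~\ref{desing-cor} / the term analysis behind Theorem~\ref{desing-th}, every $\mathcal{L}_{\mathcal{A}}$-definable map into $RV(K)^{k}$ is, after a finite $\mathcal{L}_{\mathcal{A}}$-definable partition of the domain, of the shape $x\mapsto (\mathrm{rv}\,\tau_{1}(x),\ldots,\mathrm{rv}\,\tau_{k}(x))$ for $\mathcal{L}_{\mathcal{A}}$-terms (equivalently $D$-functions) $\tau_{j}$. The point of the very limited algebraic Skolemization is precisely that the extra $RV$-predicates we have added are the graphs of \emph{algebraic} Skolem functions, so on each piece the corresponding $RV$-values are determined by finitely many possibilities among the solutions of a polynomial (or root) equation with coefficients that are themselves $\mathcal{L}_{\mathcal{A}}^{*}$-terms in $x$; choosing the branch is an $\mathcal{L}_{\mathcal{A}}^{\dag}$-definable condition, and on the selected branch the value is the image under $rv$ of the corresponding $\mathrm{root}_{m}$ or $h_{m}$ applied to those coefficient terms. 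Hence after refining the partition once more, each coordinate $g_{j}(x)$ becomes $rv$ of an explicit $\mathcal{L}_{\mathcal{A}}^{*}$-term $s_{j}(x)$, and $f(x)=t(x,\mathrm{rv}\,s_{1}(x),\ldots,\mathrm{rv}\,s_{k}(x))$, which is itself an $\mathcal{L}_{\mathcal{A}}^{*}$-term in $x$ on that piece.

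Throughout, uniformity in the models $K$ of $T_{\mathcal{A}}$ is inherited from the uniform statements of Proposition~\ref{term-resplendent} and Theorem~\ref{term-str}, together with model-theoretic compactness to bound the number of pieces $s$ and the lengths of the terms $t_{i}$ over all models; the piecewise partition is $\mathcal{L}_{\mathcal{A}}^{\dag}$-definable since it is built from $\mathcal{L}_{\mathcal{A}}$-definable cell-type data together with the branch-selection conditions that are definable exactly once the algebraic Skolem predicates are present. The main obstacle I expect is the middle step: verifying carefully that the $RV$-valued map $g$ produced by Proposition~\ref{term-resplendent} really does reduce, after the \emph{limited} Skolemization only (roots in $RV$, and the residue-field polynomial equations inside the $h_{m}$), to $rv$ of $\mathcal{L}_{\mathcal{A}}^{*}$-terms — i.e.\ that no further Skolem functions on the generically non-algebraically-bounded sorts $RV$ or $RF$ are needed. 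This is where one must use, on the one hand, that the only non-term ingredients of the $\mathcal{L}_{\mathcal{A}}^{*}$-terms are the witnesses $h_{m}$ and $\mathrm{root}_{m}$ whose defining conditions are algebraic of bounded degree, and on the other hand the quantifier elimination / term structure for the $RV$ and $Kv$ sorts, so that any $\mathcal{L}_{rv}'$-definable choice of a point of $RV(K)$ over $\mathcal{L}_{\mathcal{A}}$-definable parameters is forced to land, piecewise, in such a finite algebraic fibre. Once that is in place the substitution argument is routine.
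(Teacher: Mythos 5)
Your overall strategy coincides with the paper's: start from Proposition~\ref{term-resplendent} to write $f(x)=t(x,g(x))$, and then eliminate $g$ using the fact that the $RV$-arguments of an $\mathcal{L}_{\mathcal{A}}^{*}$-term enter only through $h_{m}$ and $\mathrm{root}_{m}$, where they merely select one of finitely many algebraic possibilities determined by the images in $RV(K)$ (or $Kv$) of the $K$-arguments --- a selection which the added algebraic Skolem functions make definable by terms. Your second paragraph is, in substance, the paper's proof, which proceeds by induction on the complexity of $t$, replacing the $RV$-argument at each occurrence of $h_{m}$ or $\mathrm{root}_{m}$ (whose $K$-arguments are already given piecewise by terms in $x$ at that stage of the induction) by the appropriate algebraic Skolem function applied to those images, at the cost of successive finite $\mathcal{L}_{\mathcal{A}}^{\dag}$-definable partitioning.

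However, the intermediate reduction you interpose --- that every $\mathcal{L}_{\mathcal{A}}$-definable map $g:X\to RV(K)^{k}$ is, after a finite partition, of the form $x\mapsto(\mathrm{rv}\,\tau_{1}(x),\ldots,\mathrm{rv}\,\tau_{k}(x))$ for valued-field terms $\tau_{j}$ --- is a genuine gap. Corollary~\ref{desing-cor} does not yield this: it concerns desingularization of terms, not the structure of $RV$-valued definable functions. By elimination of valued field quantifiers, the graph of $g$ is cut out by conditions on $(\mathrm{rv}\,t_{1}(x),\ldots,\mathrm{rv}\,t_{p}(x),\xi)$, so $g$ factors through a definable map $RV(K)^{p}\to RV(K)^{k}$, which in general is not induced by $rv$ of valued-field terms; this is precisely the failure of definable choice in the non algebraically bounded sorts $RV$ and $Kv$ that the limited Skolemization is designed to repair, so invoking such a representation of $g$ before the Skolem functions have done their work is circular. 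Fortunately the step is also unnecessary: one need not realize $g$ itself as $rv$ of terms, only replace the occurrences of its coordinates inside $t$, and there the finiteness of the choice is supplied by the defining conditions of $h_{m}$ and $\mathrm{root}_{m}$ (for $h_{m}$, by the reduction modulo $\mathcal{M}_{K}$ of the relevant polynomial). Dropping the intermediate claim and running your branch-selection argument directly by induction on the complexity of $t$ gives exactly the paper's proof.
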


\begin{proof}
Since, by Proposition~\ref{term-resplendent}, Theorem~\ref{term-str} on term structure holds for the language $\mathcal{L}_{\mathcal{A}}^{\dag}$ too, we get
$f(x) = t(x,g(x))$ for an $\mathcal{L}_{\mathcal{A}}^{*}$-term $t$ and an $\mathcal{L}_{\mathcal{A}}^{\dag}$-definable function $g: X \to RV(K)^{N}$.
We are going to get rid of the function $g$ gradually, at the cost of successive finite partitioning into $\mathcal{L}_{\mathcal{A}}^{\dag}$-definable sets at each step. To illustrate this method, consider first the case where $t$ is a henselian function $h_{m}: K^{m+1} \times Kv \to K$, namely
$$ f(x) = h_{m}(x_{0},\ldots,x_{m},g_{0}(x)) \ \ \text{with} \ \ g_{0}: X \to Kv. $$
Clearly, there exist certain $m$ algebraic Skolem functions for the formula $\sum_{i=0}^{m} \lambda_{i} \xi^{i} =0$ in the sort $RF$:
$$ \omega_{j}: (Kv)^{m+1} \setminus \{ 0 \} \to Kv, \ \ j=1,\ldots,m, $$
construed by zero if necessary, whose graphs cover its locus. Then $f(x)$ is one of the values
$$ h_{m}(x_{0},\ldots,x_{m},\omega_{j}(r(x_{0}),\ldots,r(x_{m}))), \ \ j=1,\ldots,m. $$
It follows immediately that the function $f$ is given piecewise by the above $m$ terms.

\vspace{1ex}

In the above reasoning, we used the observation that the Henselian witnesses and roots involve only a finite choice in the $RV$-sort, which depends only on the canonical images in $Kv$ or $RV(K)$ of the $K$-arguments. In the case of a Henselian witness, the finite choice is determined by the polynomial which is the reduction modulo $\mathcal{M}_{K}$ of the one from the definition of that Henselian witness. Therefore, we can proceed with induction on the complexity of the $\mathcal{L}_{\mathcal{A}}^{*}$-term $t$ in order to successively get rid of the function $g$ by repeated application of the algebraic Skolem functions, as sketched above. Eventually, we come to representation of the function $f$ as given piecewise by a finite number of $\mathcal{L}_{\mathcal{A}}^{\dag}$-terms.
\end{proof}

Now, we are going to achieve decomposition of $\mathcal{L}_{\mathcal{A}}^{\dag}$-definable sets into a finite number of (ordinary) $\mathcal{L}_{\mathcal{A}}^{\dag}$-definable cells (i.e.\ not parametrized by $RV$-parameters). In the proof, we use Theorem~\ref{SLCD}, Proposition~\ref{descript} and a model-theoretic compactness argument.

\begin{theorem}\label{cell-fin}
For $\mathcal{L}_{\mathcal{A}}^{\dag}$-definable sets
$$ X \subset K^{n} \ \ \text{and} \ \ P \subset X \times RV(K)^{t}, $$
there exists a finite decomposition of $X$ into $\mathcal{L}_{\mathcal{A}}^{\dag}$-definable ordinary cells $C_{k}$, $k=1,\ldots,q$, with centers given by $\mathcal{L}_{\mathcal{A}}^{\dag}$-terms, such that the fibers of $P$ over each twisted box of each $C_{k}$ are constant or, equivalently, the fiber of $P$ over each $\xi \in RV(K)^{t}$ is a union of some twisted boxes from the cells $C_{k}$. Thus, in particular, algebraic and definable closures coincide in the $\mathcal{L}_{\mathcal{A}}^{\dag}$-models of the theory $T_{\mathcal{A}}$.

Furthermore, one can additionally require that each $C_{k}$ is, after some coordinate permutation, a cell of type $(1,\ldots,1,0,\ldots,0)$ with 1-Lipschitz centers
$$ c_{k} = (c_{k,1},\ldots,c_{k,n}), \ \ k=1,\ldots,q. $$
Such cells $C_{k}$ shall be called 0-definable Lipschitz cells.
\end{theorem}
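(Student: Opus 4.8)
The plan is to feed the pair $(X,P)$ into the $RV$-parametrized Lipschitz cell decomposition of Theorem~\ref{SLCD} and then to \emph{un-parametrize} the resulting cells by means of the term description of Proposition~\ref{descript}. First note that the $\mathcal{L}_{\mathcal{A}}^{\dag}$-structure on $K$ is still $1$-h-minimal: the symbols of $\mathcal{L}_{\mathcal{A}}^{*}$ beyond $\mathcal{L}_{\mathcal{A}}$ are $\mathcal{L}_{\mathcal{A}}$-definable by Theorem~\ref{term-str}, and the further passage to $\mathcal{L}_{\mathcal{A}}^{\dag}$ only expands the auxiliary sort $RV$, to which the resplendency of Hensel minimality applies (cf.~\cite[Theorem~4.1.19]{C-H-R}). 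Hence Theorem~\ref{SLCD}, applied over $\mathcal{L}_{\mathcal{A}}^{\dag}$, decomposes $X$ into finitely many $\mathcal{L}_{\mathcal{A}}^{\dag}$-definable parametrized Lipschitz cells $(C_{k},\sigma_{k})$, $\sigma_{k}\colon C_{k}\to RV(K)^{m_{k}}$, each (after a coordinate permutation) of type $(1,\ldots,1,0,\ldots,0)$ with $1$-Lipschitz centers $c_{k,\xi}=(c_{k,\xi,1},\ldots,c_{k,\xi,n})$, $\xi\in\sigma_{k}(C_{k})$, and with the fibres of $P$ constant over each twisted box.

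Fixing $k$ and an index $i$, the family $(c_{k,\xi,i})_{\xi}$ glues to a single $\mathcal{L}_{\mathcal{A}}^{\dag}$-definable function
$$ \gamma_{k,i}\colon D_{k,i}\subseteq RV(K)^{m_{k}}\times K^{i-1}\longrightarrow K, \qquad \gamma_{k,i}(\xi,z):=c_{k,\xi,i}(z), $$
to which Proposition~\ref{descript} applies with $\xi$ treated as a parameter: for each $\xi$, the function $c_{k,\xi,i}$ is, on an $(\mathcal{L}_{\mathcal{A}}^{\dag}\cup\{\xi\})$-definable partition of its domain, equal to $(\mathcal{L}_{\mathcal{A}}^{*}\cup\{\xi\})$-terms. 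Running over the finitely many $k$ and $i$ and invoking model-theoretic compactness, we obtain finitely many $\mathcal{L}_{\mathcal{A}}^{*}$-terms $\tau_{1},\ldots,\tau_{M}$ (each admitting an $RV$-argument) such that each $\gamma_{k,i}$ is, piecewise over an $\mathcal{L}_{\mathcal{A}}^{\dag}$-definable partition of its domain, equal to one of them. Pulling these partitions back through the $\sigma_{k}$ refines each $(C_{k},\sigma_{k})$ into finitely many parametrized cells on each of which, for every $i$, the $i$-th center has the shape $\tau_{j(i)}(\sigma_{k}(x),x_{1},\ldots,x_{i-1})$ for a fixed term $\tau_{j(i)}$; since this refinement only shrinks the leading-term data defining the cells (together with lower-dimensional subdivisions of the bases, which are absorbed by the induction on $n$ built into cell decomposition), it preserves the cell type $(1,\ldots,1,0,\ldots,0)$, the $1$-Lipschitz centers, and the constancy of $P$ over twisted boxes.

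It remains to eliminate the surviving $RV$-parameter. An $\mathcal{L}_{\mathcal{A}}^{*}$-term involves, beyond the symbols of $\mathcal{L}_{\mathcal{A}}$, only the Henselian functions $h_{m}$ and the root functions $\mathrm{root}_{m}$, each of which reads its $K$-arguments solely through their $rv$-images --- in the Henselian case only through the reduction modulo $\mathcal{M}_{K}$ of the relevant polynomial, a polynomial with coefficients in $Kv\subseteq RV(K)$. Consequently, as $x$ ranges over one of the refined parametrized cells, $\tau_{j(i)}(\sigma_{k}(x),x_{<i})$ depends on the parameter $\xi=\sigma_{k}(x)$ through only finitely many elements of $RV(K)$, namely the admissible choices for the nested occurrences of these functions; and these are exactly the finite ambiguities resolved by the algebraic Skolem functions adjoined in forming $\mathcal{L}_{\mathcal{A}}^{\dag}$ from $\mathcal{L}_{\mathcal{A}}^{*}$ (for $\mathrm{root}_{m}$ in the $RV$-sort, for the defining polynomial equations of the $h_{m}$ in the residue field sort $\mathcal{R}\subseteq RV$). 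Hence, after one more finite $\mathcal{L}_{\mathcal{A}}^{\dag}$-definable partition --- according to the values of the relevant algebraic Skolem functions applied to $x$ --- each center becomes an honest $\mathcal{L}_{\mathcal{A}}^{*}$-term in $x$, and the cells are no longer parametrized. This produces the asserted decomposition into finitely many $\mathcal{L}_{\mathcal{A}}^{\dag}$-definable ordinary Lipschitz cells of type $(1,\ldots,1,0,\ldots,0)$ with $\mathcal{L}_{\mathcal{A}}^{*}$-term $1$-Lipschitz centers, $P$ being constant over twisted boxes throughout. Finally, a $0$-dimensional such cell is the graph of a single $\mathcal{L}_{\mathcal{A}}^{\dag}$-definable point, so every finite $\mathcal{L}_{\mathcal{A}}^{\dag}$-definable subset of $K^{n}$ is a finite union of $\mathcal{L}_{\mathcal{A}}^{\dag}$-definable points, i.e.\ algebraic and definable closures coincide in the $\mathcal{L}_{\mathcal{A}}^{\dag}$-models of $T_{\mathcal{A}}$.

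I expect the delicate point to be precisely this last reduction: verifying that the \emph{limited} algebraic Skolemization built into $\mathcal{L}_{\mathcal{A}}^{\dag}$ suffices to absorb the residual $RV$-parameter into an $\mathcal{L}_{\mathcal{A}}^{*}$-term while keeping the partition finite, and checking that the successive refinements genuinely preserve the Lipschitz cell type rather than merely the combinatorial cell structure.
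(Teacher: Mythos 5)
Your overall strategy coincides with the paper's: apply Theorem~\ref{SLCD} (via resplendency, so that it is available in $\mathcal{L}_{\mathcal{A}}^{\dag}$), use Proposition~\ref{descript} plus model-theoretic compactness to replace the parametrized centers by finitely many terms, and invoke the limited algebraic Skolemization to absorb the residual $RV$-dependence of those terms. Your final step --- that an $\mathcal{L}_{\mathcal{A}}^{*}$-term reads an $RV$-argument only through the finitely many admissible slots of $h_{m}$ and $\mathrm{root}_{m}$, and that these finitely many choices are exactly what the adjoined algebraic Skolem functions (for roots in $RV$ and for the reduced polynomial equations in $\mathcal{R}$) make definable --- is correct, and in fact makes explicit a point the paper leaves implicit inside its compactness step. (One small slip: $h_{m}$ reads its $K$-arguments in full; it is the \emph{relevant range of its $RV$-argument}, not the $K$-arguments themselves, that is controlled by the residues.)

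The genuine gap is in your reassembly of the parametrized cell into \emph{ordinary} cells. Making each center an honest term on a piece of the base does not yet yield an ordinary cell decomposition: the twisted boxes $A_{\xi}$ of a parametrized cell still depend on $\xi$ through their domains $D_{\xi}=\pi_{\leq n}(A_{\xi})$ and their fiber data, and these domains vary with $\xi$ even after the centers have been uniformized. Your parenthetical appeal to ``the induction on $n$ built into cell decomposition'' is doing real work here that you never set up. The paper handles this by an explicit induction on the ambient dimension: it forms the auxiliary $RV$-parametrized set $P:=\bigcup_{a\in D}\{a\}\times\Sigma(a)$, where $\Sigma(a)$ records which $\xi$ have $a$ in the domain of $c_{\xi,n+1}$, applies the induction hypothesis to decompose the base $D\subset K^{n}$ into finitely many \emph{ordinary} cells preparing this $P$, and only then reconstitutes $X$ as a union of ordinary cells lying over those base cells. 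Without this step your ``one more finite partition according to the values of the Skolem functions'' produces finitely many definable pieces of the base, but not a cell decomposition of the base, and hence not ordinary cells upstairs; relatedly, your claim that the successive refinements preserve the type $(1,\ldots,1,0,\ldots,0)$ and the preparation of $P$ is asserted rather than proved, whereas in the paper it follows because the base is re-decomposed by the induction hypothesis (and the Lipschitz refinement is then derived as in the classical Lipschitz cell decomposition). I recommend restructuring your argument as an induction on $n$, treating only the last coordinate's center at each stage and feeding the domain-membership set $\bigcup_{a}\{a\}\times\Sigma(a)$ back into the induction hypothesis.
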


\begin{proof}
We first establish the existence of decomposition into finitely many ordinary cells, proceeding with induction on the dimension of the ambient space $K^{n}$. The case $n=0$ is trivial. Now, assuming the proposition to hold in $K^{n}$, we shall prove it in $K^{n+1}$. Consider thus an $\mathcal{L}_{\mathcal{A}}^{\dag}$-definable subset of $K^{n+1}$. By Theorem~\ref{SLCD}, $X$ is a finite union of $\mathcal{L}_{\mathcal{A}}^{\dag}$-definable parametrized cells $(C_{k},\sigma_{k})$, $k=1,\ldots,p$, preparing $P$. Clearly, we can assume that $X$ is one of those cells; say $X=C$ for a parametrized cell $(C,\sigma)$ with centers
$$ c_{\xi} = (c_{\xi,1,\ldots,\xi,n+1}), \ \ \xi \in \sigma(C). $$

\vspace{1ex}

By Proposition~\ref{descript}, we can assume, at the cost of finer cell decomposition in $K^{n}$, that the centers
$$ c_{\xi,n+1}, \ \ \xi \in \sigma(C), $$
are given piecewise by some $\mathcal{L}_{\mathcal{A}}^{\dag}$-terms. By a routine model-theoretic compactness argument, the union of the graphs of those centers is the union of the graphs
of a finite number of $\mathcal{L}_{\mathcal{A}}^{\dag}$-terms $t_{i}$ on some $\mathcal{L}_{\mathcal{A}}^{\dag}$-definable subsets $D_{i}$ of $K^{n}$, $i=1,\ldots,p$.
If the cell in question is of the type graph, we are done.

\vspace{1ex}

In the other case, we will treat these terms as new centers
$$ c_{1,n+1},\ldots,c_{p,n+1}, $$
which requires further partitioning. For
$$ a \in D := \bigcup_{i=1}^{p} D_{i}, $$
denote by $\Sigma (a)$ the set of those $\xi \in \sigma(C)$ such that $a$ lies in the domain of $c_{\xi,n+1}$, and put
$$ P := \bigcup_{a \in D} \ \{ a \} \times \Sigma (a). $$
By the induction hypothesis, we can find a finite decomposition of $D$ into $\mathcal{L}_{\mathcal{A}}^{\dag}$-definable ordinary cells $E_{1},\ldots,E_{q}$ preparing the set $P$. Then it is not difficult to check that $X$ is the union of some $\mathcal{L}_{\mathcal{A}}^{\dag}$-definable ordinary cells $C_{1},\ldots,C_{q}$ over the cells $E_{1},\ldots,E_{q}$, which concludes the proof of the first conclusion.

\vspace{1ex}

Finally, the second conclusion, concerning Lipschitz cells, can be derived from the first one, similarly as in the classical theorem on Lipschitz cell decomposition.
\end{proof}

Likewise, we have the following version of piecewise Lipschitz continuity established in \cite[Theorem~2.3.1]{C-Fo-L} and \cite[Theorem~5.2.8]{C-H-R}.

\begin{proposition}\label{piece-Lip}
If $f: X \to K$ is an $\mathcal{L}_{\mathcal{A}}^{\dag}$-definable function on a subset $X \subset K^{n}$ which is locally 1-Lipschitz, then there exists an $\mathcal{L}_{\mathcal{A}}^{\dag}$-definable partition of $X$ into finitely many subsets $X_{1}, \ldots, X_{q}$ such that the restriction of $f$ to each $X_{i}$ is 1-Lipschitz.
\end{proposition}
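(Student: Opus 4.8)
The plan is to reduce the statement to the already‑available local‑to‑global mechanism provided by the cell decomposition of Theorem~\ref{cell-fin}, exactly as in the classical o‑minimal and Hensel‑minimal treatments (cf.~\cite[Theorem~5.2.8]{C-H-R}). First I would apply Theorem~\ref{cell-fin} to the graph $\Gamma := \mathrm{graph}(f) \subset K^{n} \times K$, taking as the auxiliary $RV$‑set the empty (or trivial) family $P$, so that $\Gamma$ is covered by finitely many $\mathcal{L}_{\mathcal{A}}^{\dag}$‑definable Lipschitz cells $C_{1},\ldots,C_{q}$ of type $(1,\ldots,1,0,\ldots,0)$ with $1$‑Lipschitz centers. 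Passing to the pieces $X_{i} := \pi_{\leq n}(\Gamma \cap C_{i})$, which form a finite $\mathcal{L}_{\mathcal{A}}^{\dag}$‑definable partition of $X$, it suffices to bound the Lipschitz constant of $f$ on each $X_{i}$ separately.

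The key point is then purely local. On each piece $X_{i}$ the graph lies in a Lipschitz cell, so after a coordinate permutation the last $n$ coordinates are organized into a tower of $1$‑Lipschitz centers $c_{i,1},\ldots,c_{i,n}$ over a base cell contained in $K^{0}$ or, more generally, a cell of the required type; in particular the domain $X_{i}$ is (the projection of) a twisted box, hence has the good geometric shape on which local $1$‑Lipschitz continuity propagates to genuine $1$‑Lipschitz continuity. Concretely, for two points $a,a' \in X_{i}$ one connects them by a definable path inside $X_{i}$ along which the valuative distance is non‑increasing — this is precisely the feature of (twisted) boxes/balls in Hensel‑minimal structures that makes the classical argument work — and then one estimates $v(f(a)-f(a'))$ by summing the local contributions, each of which is controlled by the local $1$‑Lipschitz hypothesis together with the fact that the centers defining the cell are themselves $1$‑Lipschitz. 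Using the ultrametric inequality, the sum collapses to a single term, giving $v(f(a)-f(a')) \geq v(a-a')$ on all of $X_{i}$.

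There is one technical wrinkle: a priori the cells $C_{i}$ from Theorem~\ref{cell-fin} prepare the graph of $f$ but nothing forces the restriction $f|X_{i}$ to be continuous, let alone locally $1$‑Lipschitz, on all of $X_{i}$. One therefore throws away from each $X_{i}$ the (lower‑dimensional, $\mathcal{L}_{\mathcal{A}}^{\dag}$‑definable) locus of points where $f$ fails to be locally $1$‑Lipschitz — by hypothesis this locus has empty interior in $X_{i}$ — refines so that $f$ is locally $1$‑Lipschitz on the interior of each remaining piece, and then handles the discarded lower‑dimensional locus by induction on $\dim X$, invoking the same argument in the ambient spaces of smaller dimension. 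The base case $\dim X = 0$ is vacuous. I expect the main obstacle to be precisely this bookkeeping — ensuring that after discarding the bad locus and re‑partitioning, the local $1$‑Lipschitz hypothesis is genuinely in force on each cell, and that the path‑connecting estimate is valid on twisted boxes rather than only on honest polydiscs; both are, however, by now standard in the $1$‑h‑minimal setting and require no new ideas beyond Theorem~\ref{cell-fin} and the structure of parametrized Lipschitz cells supplied by Theorem~\ref{SLCD}.
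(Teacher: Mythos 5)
The paper does not reprove this statement at all: Proposition~\ref{piece-Lip} is introduced with the words ``Likewise, we have the following version of piecewise Lipschitz continuity established in \cite[Theorem~2.3.1]{C-Fo-L} and \cite[Theorem~5.2.8]{C-H-R}'', i.e.\ it is quoted as the known piecewise-Lipschitz theorem of $1$-h-minimality, the only new content being that, thanks to the limited algebraic Skolemization built into $\mathcal{L}_{\mathcal{A}}^{\dag}$ (under which algebraic and definable closure coincide, cf.\ Theorem~\ref{cell-fin}), the finitely many pieces can be taken to be ordinary $\mathcal{L}_{\mathcal{A}}^{\dag}$-definable sets rather than $RV$-parametrized families. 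You instead try to reprove the theorem from scratch, and the central step of your argument does not work.

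Concretely, your local-to-global step --- ``one connects $a$ and $a'$ by a definable path inside $X_{i}$ along which the valuative distance is non-increasing and then estimates $v(f(a)-f(a'))$ by summing the local contributions'' --- has no meaning over a non-Archimedean field: $K^{n}$ is totally disconnected, there are no nontrivial paths, and no chaining/telescoping argument of this kind is available. The actual mechanism in \cite{C-H-R} and \cite{C-Fo-L} is different: one uses the Jacobian property (order-one Taylor approximation on the balls of a suitable cell decomposition), which yields $|f(x)-f(y)|=|c|\cdot|x-y|$ for $x,y$ in a common ball with a fixed $|c|$, so that local $1$-Lipschitz continuity forces $|c|\leq 1$ ball by ball; a separate, genuinely delicate argument is then needed to pass from the individual balls to finitely many pieces on which $f$ is globally $1$-Lipschitz --- that gluing step is the real content of the theorem and is exactly what your sketch replaces by a fictitious path integral. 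Your first step also has a soft spot: decomposing $\mathrm{graph}(f)$ into Lipschitz cells of type $(1,\ldots,1,0,\ldots,0)$ involves a coordinate permutation of $K^{n+1}$ that may place the value coordinate among the base directions, in which case the $1$-Lipschitz centers describe the graph over a mixture of domain and range variables and say nothing directly about $f$ as a function of $x$. The correct move --- and the one the paper takes --- is to invoke the cited $1$-h-minimal theorem and only upgrade its $RV$-parametrized partition to a finite ordinary one.
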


\begin{remark}\label{basic}
The (multi-sorted) basic Henselian functions embrace the additional functions from the language $\mathcal{L}_{\mathcal{A}}^{*}$. Indeed, then both the Henselian witnesses and the roots as functions on $\mathcal{O}_{K}$ are available. The latter is enough since the multiplicative inverse belongs to the language $\mathcal{L}_{\mathcal{A}}$. Therefore all the foregoing results remain valid when $\mathcal{L}_{\mathcal{A}}^{*}$ is the language $\mathcal{L}_{\mathcal{A}}$ augmented by adding the basic Henselian functions. This new meaning of the language $\mathcal{L}_{\mathcal{A}}^{*}$ will be used henceforth.
\end{remark}

We shall still need the concept of $K$-valued {\em Henselian terms} on an (ordinary or parametrized) definable cell $C \subset K^{n}$, defined below by induction on complexity. This concept corresponds with condition (*) from~\cite[Definition~3.2.1]{C-Fo-L}.

\vspace{1ex}

Constants and variables of the valued field sort are Henselian terms on $C$. Further, $\mathcal{L}_{\mathcal{A}}^{\dag}$-terms
$$ s(x) + t(x), \ s(x) \cdot t(x), \ f(t_{1}(x),\ldots,t_{m}(x)) $$
and
$$ h_{m}(t_{0}(x) \ldots, t_{m}(x),\tau(x)), $$
with an analytic function $f$ of the language, are called Henselian on $C$ if the terms
$$ s(x), t(x), t_0(x),\ldots, t_m(x) $$
are Henselian on $C$, $\tau(x)$ is an $RV$-valued term, and the functions
$$ rv(t_0(x)), \ldots, rv(t_m(x)) \ \ \text{and} \ \ \tau(x) $$
are constant on each twisted box of $C$.

\vspace{1ex}

Now, we prove the strengthening of Theorem~\ref{cell-fin} by additionally requiring that the centers of the cells are given by Henselian terms.

\begin{proposition}\label{strong-cell-fin}
For every $\mathcal{L}_{\mathcal{A}}^{\dag}$-definable set $X \subset K^{n}$, there exists a finite decomposition of $X$ into $\mathcal{L}_{\mathcal{A}}^{\dag}$-definable ordinary cells $C_{k}$, $k=1,\ldots,q$, with centers given by Henselian terms.

Furthermore, one can additionally require that each $C_{k}$ is, after some coordinate permutation, a cell of type $(1,\ldots,1,0,\ldots,0)$ with 1-Lipschitz centers
$$ c_{k} = (c_{k,1},\ldots,c_{k,n}), \ \ k=1,\ldots,q. $$
\end{proposition}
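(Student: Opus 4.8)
The plan is to bootstrap from Theorem~\ref{cell-fin}, which already gives a finite decomposition into ordinary $\mathcal{L}_{\mathcal{A}}^{\dag}$-definable cells whose centers are given by $\mathcal{L}_{\mathcal{A}}^{*}$-terms (and, after the final reduction, by $1$-Lipschitz such terms). The only thing missing is the additional requirement that these center terms be \emph{Henselian} on the cells in the technical sense just defined, i.e.\ that along each center term one can arrange that all the $RV$-valued subterms $\tau(x)$ occurring inside Henselian witness functions $h_m$, as well as the functions $rv(t_j(x))$ of the valued-field subterms feeding into those $h_m$, are constant on each twisted box of the cell. So the proof is a refinement argument: start with the decomposition from Theorem~\ref{cell-fin}, and on each cell $C_k$ collect the finitely many $RV$-valued subterms and the finitely many $rv(t_j)$-functions appearing in the $\mathcal{L}_{\mathcal{A}}^{*}$-term defining the center; these are all $\mathcal{L}_{\mathcal{A}}^{\dag}$-definable maps from $C_k$ into some $RV(K)^m$.

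First I would record, by induction on the complexity of the $\mathcal{L}_{\mathcal{A}}^{*}$-term $t$ defining a center, the finite list of "control data": every $RV$-valued subterm $\tau$ that occurs as the last argument of an $h_m$, and every function $x\mapsto rv(t_j(x))$ where $t_0,\dots,t_m$ are the valued-field arguments of that same $h_m$. Assemble these into a single $\mathcal{L}_{\mathcal{A}}^{\dag}$-definable map $\Phi\colon C_k\to RV(K)^{M}$. Then apply Theorem~\ref{cell-fin} again, this time with $P$ taken to be the graph-like set $\bigcup_{a}\{a\}\times\{\Phi(a)\}$ inside $\pi_{\le n}(C_k)\times RV(K)^{M}$ (pulled back appropriately to the base of the cell), so that the resulting finer cells have $\Phi$ constant on each twisted box. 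On each such refined cell the center term $t$ is then Henselian by construction, since precisely the subterms required to be box-constant have been made so. One must check that refining the base does not destroy the property that the center is still given by an $\mathcal{L}_{\mathcal{A}}^{*}$-term — it is just the restriction of the same $t$ — and that being a graph-type or $(0)$-type coordinate in the last variable is preserved under base refinement, which is routine.

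For the second conclusion, about $1$-Lipschitz centers of type $(1,\dots,1,0,\dots,0)$, I would proceed exactly as in Theorem~\ref{cell-fin}: having obtained ordinary cells with Henselian centers, further subdivide using Theorem~\ref{SLCD} (or the classical Lipschitz-cell argument) together with Proposition~\ref{piece-Lip} to make each center $1$-Lipschitz, then reorganize coordinates into the stated type. The point to verify here is that imposing the Lipschitz condition only requires partitioning the domain and permuting coordinates, neither of which spoils the Henselian property: a restriction of a Henselian term to a subcell is still Henselian provided the $RV$-subterms stay box-constant, which they do since twisted boxes of a subcell are contained in twisted boxes of the original cell.

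The main obstacle is the bookkeeping in the first step: making precise the inductive extraction of the control data $\Phi$ from a nested $\mathcal{L}_{\mathcal{A}}^{*}$-term, and checking that the definition of "Henselian term" is exactly captured by forcing this finite list of auxiliary $RV$-valued maps to be constant on twisted boxes. One has to be careful that the $h_m$'s can be nested inside one another and inside analytic functions $f$ of the language, so the recursion must descend through $f(t_1,\dots,t_m)$, through sums and products, and through the witness functions, at each node adding the relevant $\tau$ and $rv(t_j)$ to the list; the remaining model-theoretic compactness and re-application of Theorem~\ref{cell-fin} are then standard. I expect no difficulty in the Lipschitz/coordinate-type part, which is purely a repetition of the corresponding step in the proof of Theorem~\ref{cell-fin}.
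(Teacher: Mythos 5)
Your proposal is correct and follows essentially the same route as the paper, whose (very terse) proof is precisely ``induction on the complexity of the terms that define the centers, and successive cell decompositions preparing the $RV$-parametrized sets that correspond to their subterms'' --- i.e.\ exactly your scheme of extracting the $RV$-valued control data $\Phi$ from the subterms of each center and re-applying the preparation theorem, with the Lipschitz refinement handled as in Theorem~\ref{cell-fin}. Your elaboration of the bookkeeping (nesting of the $h_m$'s, stability of Henselianity under restriction to subcells) fills in detail the paper leaves implicit, but introduces no new idea and no deviation.
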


\begin{proof}
This follows directly by induction on the complexity of the terms that define the centers from the conclusion of the proposition, and successive cell decompositions preparing the $RV$-parametrized sets that correspond to their subterms.
\end{proof}

Hence we immediately obtain the following result on the strong Lipschitz para\-metrization of definable sets.

\begin{corollary}\label{Lip-parameter}
Every $\mathcal{L}_{\mathcal{A}}^{\dag}$-definable set $X \subset K^{n}$ is a finite union of the images of $\mathcal{L}_{\mathcal{A}}^{\dag}$-definable cells with zero centers under 1-Lipschitz map given by Henselian terms.   
\end{corollary}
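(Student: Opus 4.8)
The plan is to deduce the corollary directly from Proposition~\ref{strong-cell-fin} by recognizing every Lipschitz cell produced there as the image of a cell with zero centers under the natural ``straightening'' map. So I would first apply Proposition~\ref{strong-cell-fin} to decompose $X$ into finitely many $\mathcal{L}_{\mathcal{A}}^{\dag}$-definable ordinary cells $C_{1},\ldots,C_{q}$; after a coordinate permutation each $C = C_{k}$ is a cell of type $(1,\ldots,1,0,\ldots,0)$, say with $d$ leading ones, whose $1$-Lipschitz centers $c_{1},\ldots,c_{n}$ (abbreviating $c_{k,i}$) are given by Henselian terms, where $c_{i}$ depends on $x_{1},\ldots,x_{i-1}$ for $i\le d$ and on $x_{1},\ldots,x_{d}$ for $i>d$. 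Since $X$ is the finite union of the $C_{k}$ and the image of a union is the union of the images, it suffices to treat one such $C$.

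Next I would let $D\subset K^{n}$ be the cell with zero centers of the same type $(1,\ldots,1,0,\ldots,0)$ obtained from $C$ by keeping the twisted-box data of the first $d$ coordinates but recentering them at $0$, and setting the remaining coordinates equal to $0$; since $C$ is $\mathcal{L}_{\mathcal{A}}^{\dag}$-definable, so is $D$. Define $\phi=(\phi_{1},\ldots,\phi_{n})\colon D\to K^{n}$ recursively by
$$ \phi_{1}(y)=y_{1}+c_{1}, \qquad \phi_{i}(y)=y_{i}+c_{i}\bigl(\phi_{1}(y),\ldots,\phi_{i-1}(y)\bigr)\quad(2\le i\le d), $$
$$ \phi_{i}(y)=c_{i}\bigl(\phi_{1}(y),\ldots,\phi_{d}(y)\bigr)\quad(d<i\le n). $$
A straightforward induction on $i$ shows that $\phi$ is a definable bijection of $D$ onto $C$: given $x\in C$, the tuple $y$ with $y_{i}=x_{i}-c_{i}(x_{1},\ldots,x_{i-1})$ for $i\le d$ and $y_{i}=0$ otherwise lies in $D$ and satisfies $\phi(y)=x$, while conversely $\phi(D)\subset C$ because $\phi_{i}(y)-c_{i}(\phi_{1}(y),\ldots,\phi_{i-1}(y))=y_{i}$ keeps each recentered coordinate in the same twisted box as in $D$.

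It then remains to see that $\phi$ is $1$-Lipschitz and given by Henselian terms. For the former, an induction on $i$ using the ultrametric inequality and the $1$-Lipschitz property of each $c_{i}$ gives
$$ |\phi_{i}(y)-\phi_{i}(y')|\ \le\ \max\Bigl(|y_{i}-y_{i}'|,\ \max_{j<i}|\phi_{j}(y)-\phi_{j}(y')|\Bigr)\ \le\ \|y-y'\| $$
for every $i$, whence $\|\phi(y)-\phi(y')\|\le\|y-y'\|$. For the latter, since each $c_{i}$ is a Henselian term on $C$ and $\phi$ carries the twisted boxes of $D$ exactly onto those of $C$, the $rv$-constancy conditions figuring in the definition of ``Henselian term'' are transported along $\phi$; an induction on $i$, using that sums, products and compositions of Henselian terms with Henselian-term arguments are again Henselian, shows that each $\phi_{i}$ is a Henselian term on $D$. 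Running this over all $k$ exhibits $X$ as a finite union of images of $\mathcal{L}_{\mathcal{A}}^{\dag}$-definable cells with zero centers under $1$-Lipschitz maps given by Henselian terms.

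I expect the only genuinely delicate point to be the last one: verifying that substituting the Henselian terms $\phi_{1},\ldots,\phi_{i-1}$ into $c_{i}$ really yields a Henselian term on $D$, i.e.\ that the $RV$-valued subterms occurring in $c_{i}$ remain constant on the twisted boxes of $D$ after this change of variables. This is a bookkeeping matter about how the twisted-box structure of $C$ pulls back along $\phi$ — in particular that $\phi$ induces a bijection of twisted boxes compatible with the angular/valuation data — and once it is settled, the bijectivity and the $1$-Lipschitz bound are immediate from the ultrametric inequality.
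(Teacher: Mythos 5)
Your proof is correct and follows the same route the paper intends: Corollary~\ref{Lip-parameter} is stated there as an immediate consequence of Proposition~\ref{strong-cell-fin}, and your recentering map $\phi$ (adding back the $1$-Lipschitz Henselian-term centers coordinate by coordinate) is exactly the implicit argument, with the ultrametric inequality giving the Lipschitz bound and the twisted-box compatibility giving Henselianity of the composed terms. You have simply written out the details the paper leaves to the reader.
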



\section{Yomdin--Gromov parametrizations}

We begin with a brief historical background. The concept of Yomdin--Gromov para\-metrizations goes back to their papers~\cite{Yo-0,Yo,Grom} devoted to dynamical systems (entropy problem). It is of great importance that their fundamental result provides the minimum number $s(r,X)$ of $\mathcal{C}^{r}$-smooth charts for a given bounded semialgebraic subset $X$ of $\mathbb{R}^{n}$ only in terms of some combinatorial data. This allowed them to compare the topological entropy and the ``homological size'' of $\mathcal{C}^{r}$-smooth maps.

\vspace{1ex}

Let us mention that the original proof of the famous Yomdin--Gromov algebraic lemma was not entirely complete. Its generalizations to the o-minimal settings were initiated by Pila--Wilkie~\cite{PW}, allowing them to establish their celebrated counting theorem about rational points on the transcendental part of definable sets. They roughly followed Gromov's line of reasoning, but with much more involved technical details. This remarkable application in Diophantine geometry was a continuation of the previous research, including~\cite{BP,P}. For a survey on Diophantine applications of the Pila–Wilkie theorem, see e.g.~\cite{Sc}.

\vspace{1ex}

Recently, some crucial, sharper estimates for the number $s(r,X)$ and the complexity of semialgebraic $\mathcal{C}^{r}$-smooth charts of such a subset $X$ of $\mathbb{R}^{n}$ are given by Binyamini--Novikov~\cite{BN-0}.
While the former estimate
$$ s(r,X) \leq \mathrm{poly}\, (\beta) \cdot r^{m} $$
depends polynomially on the smoothness order $r$ and the complexity $\beta$ of equations defining $X$, and exponentially on the dimension $m$ of $X$, the latter is polynomial in $r$ and $\beta$. 

\vspace{1ex}

Notice, however, that the notion of complexity is not available in the general o-minimal settings, and must be replaced by uniformity over a definable family $X$. Again, understanding the behaviour of the number $s(r,X)$ of charts with respect to the definable family $X$ and the smoothness order $r$ is of great importance.

\vspace{1ex}

Afterwards, some other approaches were given in the papers~\cite{Paw,BN-0,BN,C-P-W}. Let us emphasize that Cluckers--Pila-Wilkie~\cite{C-P-W} made a significant progress for real, subanalytic sets (and also in the real analytic structure with exponentiation) by estimating the number $s(r,X)$ for a subanalytic family $X$ polynomially with respect to $r$.

\vspace{1ex}

The recent papers~\cite{C-Com-L,C-Fo-L} provide non-Archimedean, uniform Yomdin--Gromov parametrizations, together with estimates for the number of charts, over the equicharacteristic zero, complete, discretely valued fields $K$, possibly with some restricted analytic function symbols coming from~\cite[Example~4.4(1)]{C-Lip-0}, and over the local fields $K$, i.e.\ finite field extensions of $\mathbb{Q}_{p}$ or $\mathbb{F}_{p}((t))$; the latter fields are locally compact. Those parametrizations and estimates are used to estimate the number of integer and rational points of bounded height in algebraic or analytic varieties over $K$.

\vspace{1ex}

In this section, we investigate Yomdin--Gromov parametrizations in the general case of arbitrary equicharacteristic zero valued fields with analytic structure (Theorem~\ref{strong-r}), i.e.\
in the models $K$ of the theory $T_{\mathcal{A}}$.
Our goal is to prove that definable subsets of $\mathcal{O}_{K}^{n}$ admit, uniformly in definable families, finitely many strong Yomdin--Gromov $T^{1}$-parametrizations, and to estimate their number.

\vspace{1ex}

Eventually, the idea from~\cite{C-Com-L,C-Fo-L} of obtaining $T^{r}$-parametrizations from any given strong $T^{1}$-parametrizations via precomposing it with sufficiently large power functions, can be repeated almost verbatim in the general case under study.

\vspace{1ex}

Therefore finite $T^{r}$-parametrizations for any positive integer $r$ are available, provided that some prime invariant $[p]RV(K)$ (cf.~\cite{Rob-Zak}) of the group $RV(K)$ is finite.

\vspace{1ex}

Now we introduce, following the paper~\cite{C-Fo-L}, the basic terminology concerning $T_{r}$-parametrizations.
For any valued field $K$, a subset $P$ of $\mathcal{O}_{K}^{m}$ and a positive integer $r$, a function
$$ f = (f_{1},\ldots,f_{n}): P \to \mathcal{O}_{K}^{n} $$
is said to satisfy $T_{r}$-{\em approximation} if $P$ is open in $\mathcal{O}_{K}^{m}$, and for each $a \in P$ there is an $n$-tuple $T_{f,a}^{<r}$ of (unique) polynomials with coefficients in $\mathcal{O}_{K}$ of degree $<r$ such that
$$ \left| f(x) - T_{f,a}^{<r}(x) \right| < |x-a|^{r} \ \ \text{for all} \ \ x \in P. $$
If $f$ is of class $C^{r}$ in the sense of~\cite{BGN}, they are just the Taylor polynomials of $f$ at $a$ of order $r$ by Taylor's formula (see e.g.~\cite[Theorem~5.1]{BGN}). Clearly, $f$ satisfies $T_{1}$-approximation if and only if $f$ is 1-Lipschitz.

\vspace{1ex}

Let us mention that the set of points where a definable function $f: P \to K$ is not of class $C^{r}$ is definable and nowhere dense in $P$ (cf.~\cite[Theorem~5.1.5]{C-H-R}).

\vspace{1ex}

We say that a family $(f_{i})_{i \in I}$ of functions $f_{i}: P_{i} \to \mathcal{O}_{K}^{n}$ is a $T_{r}$-parametrization of a subset $X$ in $ \mathcal{O}_{K}^{n}$ if
$$ X = \bigcup_{i \in I} \; f_{i}(P_{i}) $$
and each $f_{i}$ satisfies $T_{r}$-approximation.

\vspace{1ex}

Observe that Cauchy’s estimates are key for the calculations related to power substitution.
Therefore, the concept of $T^{1}$-parametrization should be strengthened to be able to use Cauchy's estimates for the involved Henselian terms defined on open polydiscs. Hence the following two cases, affecting the definition of strong approximation below, are encountered:

\vspace{1ex}

(V1) the value group $vK$ is densely ordered, i.e.\ it has no smallest positive element $1$;

(V2) conversely, $vK$ has a smallest positive element $1$.

\vspace{1ex}

Let $K$ be a model of the theory $T_{\mathcal{A}}$. We say that the function $f$ as above satisfies strong $T_{1}$-approximation if $P$ is an open cell with zero centers, and $f$ is given on $P$ by Henselian terms which are 1-Lipschitz on $P$.

\vspace{1ex}

Moreover, in case (V2), those terms are required to be 1-Lipschitz also on the associated box $B_{as}$ to each box $B$ of $P$ (cf.~Lemma~\ref{hen} ff.). Here $B_{as}$ is the box over the algebraic closure $K_{alg}$ of $L$ defined by the same condition as $B$ over $K$.

\vspace{1ex}

As before, we say that a family $(f_{i})_{i \in I}$ of functions $f_{i}: P_{i} \to \mathcal{O}_{K}^{n}$ is a strong $T_{1}$-parametrization of a subset $X$ in $ \mathcal{O}_{K}^{n}$ if
$$ X = \bigcup_{i \in I} \; f_{i}(P_{i}) $$
and each $f_{i}$ satisfies strong $T_{1}$-approximation.

\vspace{1ex}

We can now formulate our main result on strong $T_{1}$-parametrization for models $K$ of the theory $T_{\mathcal{A}}$.

\begin{theorem}\label{strong-1}
Consider an $\mathcal{L}_{\mathcal{A}}$-definable subset
$$ X \subset K^{k} \times \mathcal{O}_{K}^{n}, $$
regarded as a definable family
$$ X_{w} = \{ x \in K^{n}: \ (w,x) \in X \}, \ \ w \in W \subset K^{m}. $$
Suppose that the set $X_{w}$ is of dimension $m$ for each $w \in W$. Then there exist a finite set $I$ of cardinality $s=s(X)$, and a definable family
$$ f = (f_{w,i})_{(w,i) \in W \times I} $$
of definable functions
$$ f_{w,i}: P_{w,i} \to X_{w} \ \ \text{with} \ \ P_{w,i} \subset \mathcal{O}_{K}^{m},  $$
such that $(f_{w,i})_{i \in I}$ is a strong $T_{1}$-parametrization of $X_{w}$ for each $w \in W$.
\end{theorem}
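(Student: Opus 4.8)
The plan is to reduce the family version to the absolute version fibre-by-fibre while keeping track of uniformity through the definable machinery already assembled. First I would invoke Theorem~\ref{strat-strong} (in the uniform form from Remark~\ref{strat-uniform}) applied to the family $X$ regarded as an $\mathcal{L}_{\mathcal{A}}$-definable subset of $K^{k}\times\mathcal{O}_{K}^{n}$: this decomposes each fibre $X_{w}$ into finitely many strong analytic strata $S_{k}=\sigma_{k}(W_{k})$, with the number of strata $s$ independent of $w$, and with each $W_{k}$ a strong analytic submanifold sitting inside a T-analytic set cut out by T-analytic functions on a clopen T-subdomain, everything varying definably in $w$. Since each $X_{w}$ has pure dimension $m$, only the $m$-dimensional strata matter for the parametrization. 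The point of passing through Theorem~\ref{strat-strong} rather than working directly is that the strata are now \emph{smooth} and uniformized by multi-blowups whose restriction to the complement of the exceptional divisor is a homeomorphism, so there is no singular locus to re-stratify.

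The second step is to produce, on each $m$-dimensional stratum, a Lipschitz cell structure with centers given by Henselian terms. Here I would apply Corollary~\ref{Lip-parameter} together with Proposition~\ref{strong-cell-fin} to the (locally closed, smooth) strata: each stratum is a finite union of images of $\mathcal{L}_{\mathcal{A}}^{\dag}$-definable cells of type $(1,\ldots,1,0,\ldots,0)$ with zero centers, under a $1$-Lipschitz map whose components are Henselian terms; and Proposition~\ref{descript} guarantees that after a further finite $\mathcal{L}_{\mathcal{A}}^{\dag}$-definable partition every definable function occurring is literally an $\mathcal{L}_{\mathcal{A}}^{*}$-term. The domains $P_{w,i}$ of the resulting maps $f_{w,i}$ are then open cells with zero centers inside $\mathcal{O}_{K}^{m}$, and $f_{w,i}$ is given by Henselian terms that are $1$-Lipschitz on $P_{w,i}$ — this is precisely strong $T_{1}$-approximation in case (V1). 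All of this is uniform in $w$ by the resplendency of Hensel minimality (Theorem~4.1.19 of \cite{C-H-R}) and model-theoretic compactness, which also bounds the number of pieces, giving $s=s(X)$ independent of $w$.

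The third step handles case (V2): one must arrange that the Henselian terms defining $f_{w,i}$ are $1$-Lipschitz not merely on each box $B$ of $P_{w,i}$ but on the associated box $B_{as}$ over $K_{alg}$. For this I would use the uniform-in-models character of the whole construction (Remark~\ref{desing-uniform} and Remark~\ref{strat-uniform}): the stratification, the desingularization of terms and the cell decomposition over $K$ are all restrictions of the corresponding objects over $K_{alg}$, defined by the same $\mathcal{L}_{\mathcal{A}}$-formulas. One then applies Theorem~\ref{cell-fin} and Proposition~\ref{piece-Lip} \emph{over} $K_{alg}$ to the families of Henselian terms so obtained, partitioning so that each term is $1$-Lipschitz on each $K_{alg}$-box, and restricts back to $K$; because $vK_{alg}$ is densely ordered, the smallest-positive-element obstruction disappears at the algebraically closed level, and the $1$-Lipschitz property on $B_{as}$ descends to $K$ by restriction. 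This is the step I expect to be the main obstacle — not because of any deep idea, but because it requires checking carefully that the descent of definable data from $K_{alg}$ to $K$ is compatible with the partitioning done over $K_{alg}$, i.e.\ that the $K_{alg}$-cells restrict to $K$-cells and the associated boxes match up correctly (the subtlety being that a single $K$-box may correspond to a box over $K_{alg}$ whose definition involves a strict versus non-strict valuation inequality). Finally, collecting the finitely many pieces over all strata and all cells, and noting that their number is bounded uniformly in $w$, yields the definable family $(f_{w,i})_{(w,i)\in W\times I}$ with $(f_{w,i})_{i\in I}$ a strong $T_{1}$-parametrization of $X_{w}$, completing the proof.
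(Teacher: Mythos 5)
Your proposal follows essentially the same route as the paper: strong stratification uniform over $K_{alg}$ (Theorem~\ref{strat-strong} with Remark~\ref{strat-uniform}), cell decomposition with $1$-Lipschitz Henselian-term centers (Propositions~\ref{strong-cell-fin} and~\ref{piece-Lip}) carried out over $K_{alg}$ and traced back to $K$ to handle case (V2), and model-theoretic compactness for the family version. The only slip is the claim that ``only the $m$-dimensional strata matter'': the lower-dimensional strata are still part of $X_{w}$ and must be covered by the parametrization, which the paper handles by induction on dimension (their parametrizations from $\mathcal{O}_{K}^{m'}$ with $m'<m$ being precomposed with coordinate projections so as to have domains open in $\mathcal{O}_{K}^{m}$).
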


\begin{proof}
In case (V1), the conclusion for a single set $X$ follows immediately from Proposition~\ref{strong-cell-fin}.

\vspace{1ex}

Case (V2) is far more difficult, and involves the concept of strong stratification. The following version for a single set $X$ can be proven by induction on the dimension $m$ of $X$. This will be done, as outlined below, by means of strong stratification, piecewise Lipschitz continuity and cell decomposition with centers given by 1-Lipschitz Henselian terms.

\vspace{1ex}

\begin{em}
Consider an $\mathcal{L}_{\mathcal{A}}$-definable subset $X$ of $\mathcal{O}_{K}^{n}$. Then there exist a finite set $I$ and a definable family
$f = (f_{i})_{i} \in I$ of definable functions
$$ f_{i}: P_{i} \to X \ \ \text{with} \ \ P_{i} \subset \mathcal{O}_{K}^{m} $$
such that $(f_{i})_{i \in I}$ is a strong $T_{1}$-parametrization of $X$.
\end{em}

\vspace{1ex}

Indeed, by virtue of Theorem~\ref{strat-strong} and Remark~\ref{strat-uniform}, there exists the strong stratification $S_{k} = \sigma_{k}(W_{k})$ with $W_{k} \subset N_{k}$, $k=1,\ldots,s$, which is the restriction of the one over the algebraic closure $K_{alg}$ with data $\sigma_{k}'$, $W_{k}'$. We can of course assume that $X$ is one of the sets $S_{k}$, say $X = S$ where $S = \sigma(W)$, $\sigma =\sigma'$ and $W :=W_{k} \subset N := N_{k}$, and that
$$  m = \dim X = \dim W = \dim S . $$
Moreover, the strong analytic submanifold $W$ is the trace of an $\mathcal{L}_{\mathcal{A}}$-definable T-analytic submanifold $M'$ of $(K_{alg}^{\circ})^{n} \times \mathbb{P}^{N}(K_{alg})$ on an open $\mathcal{L}_{\mathcal{A}}$-definable subset $\Omega := \Omega_{k}$ of $N \subset (K^{\circ})^{n} \times \mathbb{P}^{N}(K)$.

\vspace{1ex}

Furthermore, we can assume, via a finite $\mathcal{L}_{\mathcal{A}}$-definable partitioning and permutation of coordinates, that $M'$ is a strong analytic submanifold in $\mathcal{O}_{K_{alg}}^{n+N}$ whose projection into $K_{alg}^{m}$ is a finite-to-one, and which is locally the graph of a 1-Lipschitz map.

\vspace{1ex}

Making use of Propositions~\ref{strong-cell-fin} and~\ref{piece-Lip}, throwing away pieces of lower dimension and treating them by induction, we can assume that $M'$ is the graph of a tuple of 1-Lipschitz Henselian $\mathcal{L}_{\mathcal{A}}^{*}$-terms over an open cell $P'$ in $K_{alg}^{m}$ with centers given by 1-Lipschitz Henselian terms, and that $W = M' \cap \Omega$. This uniquely determines the $T_{1}$-parametrization $\tau'$ of $M'$ given by Henselian $\mathcal{L}_{\mathcal{A}}^{*}$-terms defined on the open cell $P''$ with zero centers, which is associated to the cell $P'$.

\vspace{1ex}

Then it is easy to check that its restriction $\tau$ to the pre-image of $M' \cap \Omega$ is a strong $T_{1}$-parametrization of $W$. Since the blowup
$$ \sigma: N \cap \mathcal{O}_{K}^{n+N} \to \mathcal{O}_{K}^{n} $$
is the projection onto the first $n$ coordinates, the composite map $\sigma \circ \tau$ is a strong $T_{1}$-parametrization of $S = \sigma(W)$, which is the desired result.

\vspace{1ex}

Finally, the family version can be obtained by model-theoretic compactness as shown below.
Suppose, just for the sake of simplicity, that the Weierstrass system $\mathcal{A} = \mathcal{A}(K)$. Given any model $L$ elementarily equivalent to $K$, one can expand the analytic structure on $L$ to $\mathcal{A}(L)$ by adding the parameters from $L$ or, more generally, to $\mathcal{A}(F)$ by adding the parameters from any subfield $F$ of $L$ (cf.~\cite[Remark~4.5.8]{C-Lip-0}); in particular, from the subfield $K(a)$ for any $a \in L$. More precisely, the expanded rings $A_{m,n}(F)$ are given by the formula
$$ \bigcup_{m',n' \in \mathbb{N}} \; \left\{ f(\xi,c,\rho, d): \ c \in (F^{\circ})^{m'}, \ d \in (F^{\circ\circ})^{n'}, \ f \in A_{m+m',n+n'} \right\}. $$
Thus we can use a routine model-theoretic compactness argument.

\end{proof}

\begin{remark}
The above theorem on strong $T_{1}$-parametrization can be easily strengthened by requiring that, for every $w \in W$, each map $f_{w,i}$ is injective, and the images $f_{w,i}(P_{w,i})$, $i \in I$, are pairwise disjoint.
\end{remark}


Now we return to the basic Henselian functions
$$ h_{m} : K^{m+1} \times RV(K) \to K. $$
Given points $a_{0},\ldots, a_{m},b \in K$, $b \neq 0$, consider the open box
$$ B := \prod_{i=1}^{m} a_{i} \cdot (1 + \mathfrak{m}). $$
The following lemma holds (cf.~\cite[Lemma~6.3.11]{C-Lip-0}).

\begin{lemma}\label{hen}
Under the above assumptions, the restriction
$$  h: B \times \{ rv(b) \} \to b \cdot (1 + \mathfrak{m}) $$
of the function $h_{m}$ is given by a power series in
$$ \mathcal{O}^{\dag}(B) := K \otimes_{\mathcal{O}_{K}} \mathcal{O}(B); $$
in our convention, $h \in \mathcal{O}^{\dag}(B)$.
\end{lemma}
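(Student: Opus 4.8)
The plan is to reduce the statement about the Henselian witness function $h_m$ to the behaviour of a single root of a polynomial equation over the affinoid-type algebra attached to the box $B$, and then invoke the Weierstrass--R\"uckert machinery (Hensel's lemma in the ring $\mathcal{O}^{\dag}(B)$) to exhibit that root as an element of $\mathcal{O}^{\dag}(B)$. First I would recall that $\mathcal{O}(B)$ is, after the affine change of coordinates $\xi_i \mapsto a_i(1+\xi_i)$ sending $B$ isomorphically to the open unit polydisc $(\mathfrak{m})^m$, precisely the ring $A_{0,m}^{\dag}(K)$ of global analytic functions in $m$ variables of the maximal-ideal sort, which by the discussion in Section~2 is Noetherian, normal, excellent and satisfies the Weierstrass division/preparation theorems; in particular it is Henselian along the vanishing locus of $\mathfrak{m}$ in the appropriate sense, and one has the strong form of Hensel's lemma available (the same one cited before Lemma~4.8, cf.~\cite[Theorem~4.1.3]{E-P}, now applied to the complete local rings obtained from $\mathcal{O}^{\dag}(B)$).

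The second step is to write down the polynomial whose root $h$ is. Over the box $B$, the arguments of $h_m$ are the coordinate functions together with the fixed value $rv(b)$; by definition of $h_m$, at each point $x\in B$ the value $h(x)$ is the unique $\beta\in\mathcal{O}_K$ with $f_x(\beta)=0$ and $rv(\beta)=rv(b)$, where $f_x$ is the polynomial with coefficients read off from the first $m+1$ coordinates at $x$. I would package these coefficient functions as elements $A_0(\xi),\ldots,A_m(\xi)\in\mathcal{O}^{\dag}(B)$ and consider $F(\xi,Y)=\sum_{i=0}^m A_i(\xi)Y^i\in\mathcal{O}^{\dag}(B)[Y]$. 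The hypotheses defining $h_m$ (the reduction of $F$ modulo $\mathfrak{m}$ has $e:=r(b)$ as a simple root of the reduced polynomial, uniformly over $B$ since $B$ is a single $rv$-twisted box) translate exactly into: $F(\xi,b\cdot\bar Y)$, after the substitution $Y\mapsto b(1+Z)$ and rescaling, satisfies the hypotheses of the strong Hensel lemma over $\mathcal{O}^{\dag}(B)$ relative to the ideal generated by $\mathfrak{m}$ and $Z$. This yields a unique $Z=\zeta(\xi)\in\mathcal{O}^{\dag}(B)$ with $\zeta\equiv 0$ appropriately and $F(\xi,b(1+\zeta(\xi)))=0$; setting $h=b(1+\zeta)$ gives an element of $b\cdot(1+\mathfrak{m})\cdot\mathcal{O}^{\dag}(B)$, i.e.\ of $\mathcal{O}^{\dag}(B)$, whose values coincide pointwise with $h_m$ on $B\times\{rv(b)\}$ by the uniqueness clause in the definition of $h_m$.

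The third step is to check that the power series so produced is the \emph{right} one, i.e.\ that the element of $\mathcal{O}^{\dag}(B)$ furnished by Hensel's lemma really interprets as the restriction of $h_m$ and not merely as some function agreeing with it generically. This is where one uses that $B$ is chosen so that the Jacobian condition $v(\partial F/\partial Y)$ matches the minimal term, as in Lemma~4.8, holds \emph{identically} on $B$ (not just off a thin set), which is guaranteed precisely because $B$ is a single open box on which all the relevant $rv$'s are constant; hence the Hensel root exists at every point of $B$, is unique at every point, and by continuity of the power series and pointwise uniqueness it must equal $h_m(\,\cdot\,,rv(b))$ everywhere on $B$. The main obstacle I anticipate is verifying that the Weierstrass system $\mathcal{A}$, equivalently the ring $\mathcal{O}^{\dag}(B)$ built from $A_{0,m}^{\dag}(K)$, genuinely supports the strong Hensel lemma in the form needed here, with control of the root in the \emph{same} ring rather than in some completion or localization — but this is exactly the good algebraic behaviour (excellence, the Weierstrass--R\"uckert theory) of the one-kind-of-variable rings recalled in Section~2, so the difficulty is one of bookkeeping rather than of substance.
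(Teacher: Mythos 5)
Your argument is correct and is essentially the standard one: the paper itself gives no proof of Lemma~\ref{hen}, only the citation to Cluckers--Lipshitz (Lemma~6.3.11 of \emph{Fields with analytic structure}), and your reconstruction --- rescale $B$ to the open unit polydisc so that $\mathcal{O}^{\dag}(B)\cong A^{\dag}_{0,m}(K)$, package the coefficients into $F(\xi,Y)=\sum A_i(\xi)Y^i$, perform the homothetic substitution $Y=b(1+Z)$ and divide by the dominant term so that the Cohen--Denef--Pas hypotheses become a simple-root-mod-$\mathcal{M}_K$ condition, solve by Weierstrass division/Hensel in the separated power series ring, and identify the result with $h_m(\cdot,rv(b))$ by pointwise uniqueness --- is exactly how that cited lemma is proved. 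The one point worth tightening is that the defining condition of the \emph{basic} Henselian functions is the weighted condition of Lemma~5.2 (not literally ``$r(b)$ is a simple root of the reduction''), but your rescaling step converts the former into the latter, so this is a matter of phrasing rather than a gap.
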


Now observe that the supremum norm of any function
$f \in A^{\dag}_{m,n}$ on $(K^{\circ}_{alg})^{m} \times (K^{\circ \circ}_{alg})^{n}$
equals the Gauss norm of $f$ (cf.~\cite[Remark~5.2.8]{C-Lip-0}). Clearly, this remains true for the supremum norm on $(K^{\circ})^{m} \times (K^{\circ \circ})^{n}$ whenever the residue field $Kv$ is infinite and the value group $vK$ is densely ordered.
Hence the classical Cauchy's estimates hold on any open box $B_{as}$ for each $f \in \mathcal{O}^{\dag}(B_{as})$; and they hold on any open box $B$ for each $f \in \mathcal{O}^{\dag}(B)$ under the above condition on the residue field and value group. This is why the concept of strong $T_{1}$-approximation has been introduced.

\vspace{1ex}

Moreover, as already indicated, a $T_{r}$-approximation can be obtained from any given $T_{1}$-approximation via precomposing with the $r$-th power functions, which relies heavily on Cauchy's estimates.

\vspace{1ex}

To achieve finite parametrizations, similarly as in~\cite{C-Fo-L}, one must require that the $r$-th congruence invariant $b_{r} = [r]RV(K)$ of the group $RV(K)$ is finite; or at least, that the prime invariant $[p]RV(K)$ of $RV(K)$ is finite for a prime number $p$ (cf.~\cite{Rob-Zak}). (Otherwise one can achieve only parametrizations in the infinite number of cosets of the $r$-th powers in the group $RV(K)$.) In the latter case, precomposing with some power $p^{l} > r$ is sufficient. In this fashion, Theorem~\ref{strong-1} yields the following main result on $T_{r}$-parametrizations.

\begin{theorem}\label{strong-r}
Consider an $\mathcal{L}_{\mathcal{A}}$-definable subset
$$ X \subset K^{k} \times \mathcal{O}_{K}^{n}, $$
regarded as a definable family
$$ X_{w} = \{ x \in K^{n}: \ (w,x) \in X \}, \ \ w \in W \subset K^{k}, $$
and a positive integer $r$. Suppose that the set $X_{w}$ is of dimension $m$ for each $w \in W$ and that the congruence invariant $b_{r} = [r]RV(K)$ is finite. Then there exist a positive integer $s = s(X)$, a finite set $I$ of cardinality
$$ s(r,X) \leq s \cdot b_{r}^{m}, $$
and a definable family
$$ f = (f_{w,i})_{(w,i) \in W \times I} $$
of definable functions
$$ f_{w,i}: P_{w,i} \to X_{w} \ \ \text{with} \ \ P_{w,i} \subset \mathcal{O}_{K}^{m},  $$
such that $(f_{w,i})_{i \in I}$ is a $T_{r}$-parametrization of $X_{w}$ for each $w \in W$.
\end{theorem}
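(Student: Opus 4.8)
The strategy is to deduce Theorem~\ref{strong-r} from the strong $T_{1}$-parametrization of Theorem~\ref{strong-1} by precomposing with $r$-th powers, exactly as in~\cite{C-Fo-L} but now in the general separated-analytic setting. First I would apply Theorem~\ref{strong-1} to obtain, uniformly in $w \in W$, a finite family $(g_{w,j})_{j \in J}$ of definable functions $g_{w,j}\colon Q_{w,j} \to X_{w}$, with $Q_{w,j}$ an open cell with zero centers in $\mathcal{O}_{K}^{m}$, each $g_{w,j}$ given by Henselian $\mathcal{L}_{\mathcal{A}}^{*}$-terms that are $1$-Lipschitz on $Q_{w,j}$ (and in case (V2) also $1$-Lipschitz on each associated box $B_{as}$). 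The cardinality of $J$ is some $s = s(X)$ independent of $w$.

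\textbf{The power trick.} For each box $B$ occurring in the cell $Q_{w,j}$ I would consider the substitution $x \mapsto a + (x - a')^{r}$ reparametrizing $B$ (coordinatewise, after translating the center of $B$ to zero), followed by $g_{w,j}$. By Lemma~\ref{hen} and the discussion following it, the Henselian terms defining $g_{w,j}$ are, on each box, given by power series lying in $\mathcal{O}^{\dag}(B)$ (resp. $\mathcal{O}^{\dag}(B_{as})$ in case (V2)), so the classical Cauchy estimates apply to their Taylor coefficients on $B_{as}$. Expanding $g_{w,j}(a + (x-a')^{r})$ and truncating at degree $<r$, the Cauchy bound on the higher-order coefficients forces the tail to have norm $< |x - a'|^{r}$, which is precisely $T_{r}$-approximation. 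These estimates are the subtle part carried out in~\cite{C-Com-L,C-Fo-L}; as noted in the excerpt, they carry over verbatim, so I would cite them rather than reproduce them.

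\textbf{Counting the charts.} The point of precomposing with the $r$-th power map is that $\{x^{r}\}$ covers $\mathcal{O}_{K}$ only up to cosets of $r$-th powers, hence one box must be replaced by one box per coset of $RV(K)^{r}$ in $RV(K)$; there are $b_{r} = [r]RV(K)$ of these in each of the $m$ nontrivial coordinate directions, giving a multiplicative factor $b_{r}^{m}$. Since the original number of charts was $s(X)$ independent of $w$, the new index set $I$ has cardinality at most $s(X)\cdot b_{r}^{m}$, still independent of $w$, yielding the stated bound $s(r,X) \leq s \cdot b_{r}^{m}$. If only the prime invariant $[p]RV(K)$ is finite one instead precomposes with $p^{l}$ for $p^{l} > r$, which still gives finitely many charts (with a possibly worse but finite constant). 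The uniformity in $w$ is inherited from Theorem~\ref{strong-1} together with a routine model-theoretic compactness argument of the kind already used there.

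\textbf{Main obstacle.} The genuine difficulty is not the bookkeeping but ensuring that Cauchy's estimates are legitimately available for the Henselian terms on the boxes in play; this is exactly why strong $T_{1}$-approximation (and, in case (V2), the passage to the associated box $B_{as}$ over $K_{alg}$, where the supremum norm equals the Gauss norm) was built into the earlier definitions and theorem. Given that scaffolding, the remaining work is to check that the substitution by $r$-th powers respects the Henselian-term structure and the $1$-Lipschitz property on the relevant boxes, and then to transport the estimates along the blowups $\sigma_{k}$ — all of which is done as in~\cite{C-Fo-L}.
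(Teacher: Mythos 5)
Your proposal matches the paper's argument: the paper likewise derives Theorem~\ref{strong-r} from the strong $T_{1}$-parametrization of Theorem~\ref{strong-1} by precomposing with $r$-th powers (or $p^{l}$-th powers when only $[p]RV(K)$ is finite), justifying Cauchy's estimates on the boxes $B$ and $B_{as}$ via Lemma~\ref{hen} and the Gauss-norm remark, deferring the detailed estimates to~\cite{C-Com-L,C-Fo-L}, and obtaining the factor $b_{r}^{m}$ from the cosets of $r$-th powers in $RV(K)$ in each of the $m$ coordinate directions. This is essentially the same proof, presented in slightly more detail than the paper's own sketch.
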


\begin{remark}
If a prime invariant $[p]RV(K)$ is finite, then the uniform estimate of the number of $T_{r}$-parametrizations is given by
$$ s(r,X) \leq s \cdot ([p]RV(K))^{lm}, $$
where $l$ is the smallest integer with $r \leq p^{l}$.
\end{remark}

\vspace{2ex}

\vspace{3ex}

\begin{small}

Institute of Mathematics

Faculty of Mathematics and Computer Science

Jagiellonian University

ul.~Profesora S.\ \L{}ojasiewicza 6

30-348 Krak\'{o}w, Poland

{\em E-mail address: nowak@im.uj.edu.pl}
\end{small}

\end{document}